\newtheorem{theorem}{Theorem}[section]
\newtheorem{lemma}[theorem]{Lemma}
\newtheorem{cor}[theorem]{Corollary}
\theoremstyle{definition}
\newtheorem{example}[theorem]{Example}
\theoremstyle{remark}
\newtheorem{remark}[theorem]{\bf{Remark}}
\numberwithin{equation}{section}
\begin{document}

\title [Refined inequalities for the numerical radius  of  operators]{Refined inequalities for the numerical radius of Hilbert space operators}

\author[P. Bhunia, S. Jana and K. Paul]{Pintu Bhunia, Suvendu Jana and Kallol Paul}

\address [Bhunia] {Department of Mathematics, Jadavpur University, Kolkata 700032, West Bengal, India}
\email{pintubhunia5206@gmail.com}

\address[Jana] {Department of Mathematics, Mahisadal Girls College, Purba Medinipur 721628, West Bengal, India}
\email{janasuva8@gmail.com}

\address[Paul] {Department of Mathematics, Jadavpur University, Kolkata 700032, West Bengal, India}
\email{kalloldada@gmail.com}



\subjclass[2010]{Primary 47A12, Secondary 47A30}
\keywords{Numerical radius, Operator norm, Hilbert space, Bounded linear operator, Inequality}

\maketitle
\begin{abstract}
We present some new upper and lower bounds for the numerical radius of bounded linear operators on a complex Hilbert space and show that these are stronger than the existing ones. In particular, we prove that if $A$ is a bounded linear operator on a complex Hilbert space $\mathcal{H}$ and if $\Re(A)$, $\Im(A)$ are the real part, the imaginary part of $A$, respectively, then
$$ w(A)\geq\frac{\|A\|}{2} +\frac{1}{2\sqrt{2}}  \Big | \|\Re(A)+\Im(A)\|-\|\Re(A)-\Im(A)\|  \Big | $$ 
and
$$ w^2(A)\geq\frac{1}{4}\|A^*A+AA^*\|+\frac{1}{4}\Big| \|\Re(A)+\Im(A)\|^2-\|\Re(A)-\Im(A)\|^2\Big|. $$
Here $w(.)$ and $\|.\|$ denote the numerical radius and the operator norm, respectively.  Further, we obtain refinement of  inequalities for the numerical radius of the product of two operators.  Finally, as an application of the second inequality mentioned above, we obtain an  improvement of upper bound for the numerical radius of the commutators of operators.

\end{abstract}

\section{Introduction}

\noindent Let $ \mathcal{B}(\mathcal{H})$ denote the $\mathcal{C}^*$-algebra of all bounded linear operators on a complex Hilbert space $\mathcal{H}$ with inner product $\langle .,.  \rangle $ and the corresponding norm $\|.\|$  induced by the inner product $\langle .,. \rangle .$  For $A\in \mathcal{B}(\mathcal{H})$, let $A^*$ be the adjoint of $A$ and $|A|=({A^*A})^{\frac{1}{2}}$. Let $\Re(A)$ and $\Im(A)$ denote the real part and the imaginary part of $A$, respectively, i.e., $\Re(A)=\frac{1}{2}(A+A^*)$ and $\Im(A)=\frac{1}{2\rm i}(A-A^*)$. 
Let $\|A\|$ and $w(A)$ denote the operator norm and the numerical radius of $A$, respectively. Recall that $$w(A)=\sup\{|\langle Ax,x  \rangle|: x\in \mathcal{H}, \|x\|=1\}.$$ 
It is well known that the numerical radius, i.e., $ w(.)$ defines a norm on $\mathcal{B}(\mathcal{H})$ and is equivalent to the operator norm $\|.\|$. In fact, for every $ {A}\in\mathcal{B}(\mathcal{H})$,  we have
\begin{eqnarray}\label{eqv}
\frac{1}{2} \|A\|\leq w({A})\leq\|A\|.
\end{eqnarray}
The inequalities in (\ref{eqv}) are sharp, $w(A)=\frac{\|A\|}{2}$ if $A^2=0$ and $w(A)=\|A\|$ if $A^*A=AA^*.$ 
In \cite{E}, Kittaneh improved on the inequalities in  (\ref{eqv}) and proved that 
\begin{eqnarray}
\frac{1}{4}\|A^*A+A{A}^*\|\leq w^2({A})\leq\frac{1}{2}\|A^*A+A{A}^*\|.
\label{d}\end{eqnarray}
Also in \cite{M}, Kittaneh refined the second inequality in (\ref{eqv}) to prove that 
\begin{eqnarray}\label{b}
w(A)\leq\frac{1}{2}(\|A\|+\|A^2\|^{\frac{1}{2}}).
\end{eqnarray}
The Aluthge transform of ${A}$, denoted as $\tilde{{A}}$, is defined as $\tilde{{A}}=|{A}|^\frac{1}{2}U|{A}|^\frac{1}{2}$,
where $U$ is the (unique) partial isometry appearing in the polar
decomposition $ A =U|A|$ of $A$ with $ \ker A = \ker U.$ It follows from the
definition of $\tilde{A}$ that $ \|\tilde{A}\|\leq\|A\|$. Also, $ w(\tilde{A})\leq w(A)$. In \cite{D}, Yamazaki proved that if $A\in B(\mathcal{H}) $, then 
\begin{eqnarray}\label{e}
 w(A)\leq\frac{1}{2}(\|A\|+w(\tilde{A})).
 \end{eqnarray}
He also proved that (\ref{e}) refine (\ref{b}). For more recent  results on the refinement of the numerical radius inequalities of bounded linear operators we refer the readers to see \cite{P7,P18,P8,P10,P1}.
Next we note some well known inequalities for the numerical radius of product of two operators. 
Dragomir  \cite{aa}  proved  that if A,B $\in\mathcal{B}(\mathcal{H})$  and $r\geq1$, then
 \begin{eqnarray}\label{f}
 w^r(B^*A)\leq\frac{1}{2}(\||A|^{2r}+|B|^{2r}\|).
 \end{eqnarray}
 In \cite{AA}, Hedarbeygi et al. established a refinement of (\ref{f}) and proved that if $ A,B \in\mathcal{B}(\mathcal{H})$ and $r\geq1$, then
  \begin{eqnarray}\label{h}
   w^{2r}(B^*A)\leq\frac{1}{2}w^r(|B|^2|A|^2)+\frac{1}{4}\| |B|^{4r}+|A|^{4r}\|.
\end{eqnarray} 
 \smallskip
In section 2, we obtain several upper and lower bounds for the numerical radius of bounded linear operators on $\mathcal{H}$, which improve on the existing ones in (\ref{eqv}), (\ref{d}), (\ref{b}) and (\ref{e}). In section 3, we obtain upper bounds for the numerical radius of the product of two operators, which refine (\ref{f}) and (\ref{h}). As a direct application of the results  obtained here,  we obtain an  improvement of upper bound for the numerical radius of the commutators of operators.

\section{Numerical radius inequalities of operators}

We start our work with the following improvement of the first inequality in (\ref{eqv}).

\begin{theorem}\label{thn}
If $ A\in\mathcal{B}(\mathcal{H})$, then 
$$ w(A)\geq\frac{1}{2} \|A\|+\frac{1}{2\sqrt{2}}  \mid \|\Re(A)+\Im(A)\|-\|\Re(A)-\Im(A)\|  \mid.  $$ 
\end{theorem}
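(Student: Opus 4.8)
The plan is to reduce the numerical radius to a supremum of operator norms of self-adjoint operators via the standard formula $w(A)=\sup_{\theta\in\mathbb{R}}\|\Re(e^{i\theta}A)\|$. Writing $B=\Re(A)$ and $C=\Im(A)$ (both self-adjoint), a direct computation using $\Re(X)=\tfrac12(X+X^*)$ gives $\Re(e^{i\theta}A)=\cos\theta\,B-\sin\theta\,C$, so that
$$ w(A)=\sup_{\theta\in\mathbb{R}}\|\cos\theta\,B-\sin\theta\,C\|. $$
This identity is the engine of the whole argument, since it lets me probe $w(A)$ at judiciously chosen angles.

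First I would extract two lower bounds by specializing $\theta$. Taking $\theta=-\pi/4$ yields $\Re(e^{-i\pi/4}A)=\tfrac{1}{\sqrt2}(B+C)$, and taking $\theta=\pi/4$ yields $\Re(e^{i\pi/4}A)=\tfrac{1}{\sqrt2}(B-C)$. Since $w(A)$ dominates the norm at every individual angle, this gives
$$ w(A)\geq\tfrac{1}{\sqrt2}\,\|\Re(A)+\Im(A)\|\qquad\text{and}\qquad w(A)\geq\tfrac{1}{\sqrt2}\,\|\Re(A)-\Im(A)\|. $$

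The crucial auxiliary step is an upper bound for $\|A\|$ in terms of the same two quantities. Here I would use the algebraic identity $A=\tfrac{1+i}{2}(B+C)+\tfrac{1-i}{2}(B-C)$, which is checked by reading off the coefficients of $B$ and $C$. Applying the triangle inequality together with $|1\pm i|=\sqrt2$ produces
$$ \|A\|\leq\tfrac{1}{\sqrt2}\big(\|\Re(A)+\Im(A)\|+\|\Re(A)-\Im(A)\|\big). $$
Spotting this decomposition is the main insight of the proof; everything else is routine once it is in place.

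Finally I would combine the pieces. Set $p=\|\Re(A)+\Im(A)\|$ and $q=\|\Re(A)-\Im(A)\|$, and assume without loss of generality $p\geq q$; the remaining case follows by symmetry, since replacing $A$ by $A^*$ interchanges $p$ and $q$ while leaving $w(A)$ and $\|A\|$ unchanged, so the asserted inequality is invariant. Then the first step gives $w(A)\geq p/\sqrt2$ and the third step gives $\|A\|\leq(p+q)/\sqrt2$, whence
$$ \tfrac12\|A\|+\tfrac{1}{2\sqrt2}(p-q)\leq\frac{p+q}{2\sqrt2}+\frac{p-q}{2\sqrt2}=\frac{p}{\sqrt2}\leq w(A), $$
which is exactly the claimed bound $w(A)\geq\tfrac12\|A\|+\tfrac{1}{2\sqrt2}\big|\,p-q\,\big|$. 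The only delicate point is the auxiliary estimate of the third paragraph; the angle evaluations and the final combination are elementary.
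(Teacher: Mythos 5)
Your proof is correct and follows essentially the same route as the paper: both establish $w(A)\geq\frac{1}{\sqrt{2}}\max\{\|\Re(A)+\Im(A)\|,\|\Re(A)-\Im(A)\|\}$, then bound $\sqrt{2}\,\|A\|\leq\|\Re(A)+\Im(A)\|+\|\Re(A)-\Im(A)\|$, and combine via the identity $\max\{p,q\}=\frac{p+q}{2}+\frac{|p-q|}{2}$. The only cosmetic differences are that the paper derives the first bound from the Cartesian decomposition of $\langle Ax,x\rangle$ rather than from $\sup_{\theta}\|\Re(e^{i\theta}A)\|$, and it encodes your decomposition $A=\frac{1+i}{2}(B+C)+\frac{1-i}{2}(B-C)$ in the equivalent adjoint form $(B+C)+i(B-C)=(1+i)A^*$.
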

\begin{proof}
 Let $x \in \mathcal{H}$ with $\|x\|=1$. Then by Cartesian decomposition of $A$, we have
  \begin{eqnarray*}
  	\mid\langle Ax,x\rangle\mid &=& \sqrt{\langle \Re(A)x,x\rangle^2+\langle \Im(A)x,x\rangle^2}\\
  	&\geq& \frac{1}{\sqrt{2}}(\mid\langle \Re(A)x,x\rangle\mid+\mid\langle \Im(A)x,x\rangle\mid)\\ &\geq&\frac{1}{\sqrt{2}}\mid\langle (\Re(A)\pm  \Im(A))x,x\rangle\mid.
  \end{eqnarray*}
Taking supremum over all $x\in \mathcal{H}$,  $\|x\|=1$, we get 
$$ w(A)\geq\frac{1}{\sqrt{2}}\|\Re(A)\pm \Im(A)\|. $$ 
Thus, we have $$w(A)\geq \frac{1}{\sqrt{2}}\max\{\|\Re(A)+\Im(A)\|,\|\Re(A)-\Im(A)\|\}.$$
Now,
\begin{eqnarray*}
	&&\frac{1}{\sqrt{2}}\max\{\|\Re(A)+\Im(A)\|,\|\Re(A)-\Im(A)\|\}\\
	&=&\frac{1}{\sqrt{2}}\left\lbrace\frac{ \|\Re(A)+\Im(A)\|+\|\Re(A)-\Im(A)\|}{2}+\frac{\mid \|\Re(A)+\Im(A)\|-\|\Re(A)-\Im(A)\|\mid}{2}\right\rbrace\\ &\geq&\frac{1}{\sqrt{2}}\left\lbrace\frac{ \|(\Re(A)+\Im(A))+\rm i(\Re(A)-\Im(A))\|}{2}+\frac{\mid \|\Re(A)+\Im(A)\|-\|\Re(A)-\Im(A)\|\mid}{2}\right\rbrace\\
	&=&\frac{1}{\sqrt{2}}\left\lbrace\frac{ \|(1+\rm i)A^*\|}{2}+\frac{\mid \|\Re(A)+\Im(A)\|-\|\Re(A)-\Im(A)\|\mid}{2}\right\rbrace\\
	 &=&\frac{\|A\|}{2}+\frac{\mid \|\Re(A)+\Im(A)\|-\|\Re(A)-\Im(A)\|\mid}{2\sqrt{2}}.
\end{eqnarray*} 
This completes the proof.
\end{proof}

\begin{remark}\label{remark1}
(i) Clearly, the inequality in Theorem \ref{thn} refines the first inequality in (\ref{eqv}).\\
(ii)  If $ w(A)=\frac{\|A\|}{2}, $ then $ \|\Re(A)+\Im(A)\|=\|\Re(A)-\Im(A)\|,$ but the converse is not necessarily true.\\
(iii) In \cite[Th. 2.1]{PK21}, Bhunia and Paul proved that 
\begin{eqnarray}\label{pk1}
	w(A)&\geq &\frac{1}{2} \left \|A \right\| +  \frac{  1}{2} \mid \|\Re(A)\|-\|\Im(A)\|\mid.
\end{eqnarray}
Clearly, if $A\in \mathcal{B}(\mathcal{H})$ is (non-zero) self-adjoint, then 
$$\frac{1}{2} \left \|A \right\| +  \frac{  1}{2} \mid \|\Re(A)\|-\|\Im(A)\|\mid> \frac{\|A\|}{2}+\frac{\mid \|\Re(A)+\Im(A)\|-\|\Re(A)-\Im(A)\|\mid}{2\sqrt{2}}$$ and if $A=\begin{pmatrix}
1+{\rm i} & 0\\
0 &0
\end{pmatrix} $, then 
$$\frac{1}{2} \left \|A \right\| +  \frac{  1}{2} \mid \|\Re(A)\|-\|\Im(A)\|\mid< \frac{\|A\|}{2}+\frac{\mid \|\Re(A)+\Im(A)\|-\|\Re(A)-\Im(A)\|\mid}{2\sqrt{2}}.$$
Thus, we conclude that the inequality in Theorem \ref{thn} and the inequality (\ref{pk1}) are not comparable, in general.
\end{remark}

Next we prove the following inequality which improve on the first inequality in (\ref{d}). 

\begin{theorem}
If $ A\in\mathcal{B}(\mathcal{H})$, then 
$$ w^2(A)\geq\frac{1}{4}\|A^*A+AA^*\|+\frac{1}{4}\mid \|\Re(A)+\Im(A)\|^2-\|\Re(A)-\Im(A)\|^2\mid. $$
\label{thnn}\end{theorem}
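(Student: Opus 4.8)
The plan is to mirror the argument used for Theorem~\ref{thn}, but at the level of squares. Writing $P=\Re(A)$ and $Q=\Im(A)$, the Cartesian decomposition gives, for every unit vector $x$, the identity $|\langle Ax,x\rangle|^2=\langle Px,x\rangle^2+\langle Qx,x\rangle^2=\frac12\big(\langle (P+Q)x,x\rangle^2+\langle (P-Q)x,x\rangle^2\big)$. Dropping either nonnegative summand, taking the supremum over unit vectors, and using that $\sup_{\|x\|=1}\langle Tx,x\rangle^2=\|T\|^2$ for self-adjoint $T$, I obtain $w^2(A)\ge \frac12\|P+Q\|^2$ and $w^2(A)\ge\frac12\|P-Q\|^2$; equivalently, this is just the square of the intermediate estimate $w(A)\ge\frac{1}{\sqrt2}\|\Re(A)\pm\Im(A)\|$ already established inside the proof of Theorem~\ref{thn}. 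Hence
$$ w^2(A)\ge\frac12\max\big\{\|P+Q\|^2,\ \|P-Q\|^2\big\}. $$

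It then remains to check that the right-hand side of the stated inequality does not exceed this last quantity. Writing $a=\|P+Q\|^2$ and $b=\|P-Q\|^2$, I would use the elementary identity $\max\{a,b\}=\frac12(a+b)+\frac12|a-b|$, so that
$$ \frac12\max\{a,b\}=\frac14\big(\|P+Q\|^2+\|P-Q\|^2\big)+\frac14\big|\,\|P+Q\|^2-\|P-Q\|^2\,\big|. $$
The key algebraic observation is the operator identity $A^*A+AA^*=2(P^2+Q^2)=(P+Q)^2+(P-Q)^2$, which lets me bound $\|A^*A+AA^*\|\le\|P+Q\|^2+\|P-Q\|^2$ by subadditivity of the operator norm. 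Substituting this into the display above yields exactly
$$ \frac12\max\{a,b\}\ge\frac14\|A^*A+AA^*\|+\frac14\big|\,\|\Re(A)+\Im(A)\|^2-\|\Re(A)-\Im(A)\|^2\,\big|, $$
and combining with the lower bound for $w^2(A)$ finishes the proof.

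The step I expect to be delicate is the \emph{direction} of the triangle inequality: to preserve a valid lower bound for $w^2(A)$ I must bound $\|A^*A+AA^*\|$ from above by $\|P+Q\|^2+\|P-Q\|^2$, i.e.\ apply subadditivity, rather than attempting the reverse. This is the exact analogue of the moment in Theorem~\ref{thn} where $\|\Re(A)+\Im(A)\|+\|\Re(A)-\Im(A)\|$ is bounded below by $\|(\Re(A)+\Im(A))+\mathrm{i}(\Re(A)-\Im(A))\|=\sqrt2\,\|A\|$; here the role of that norm-of-a-sum step is played by the identity $A^*A+AA^*=(P+Q)^2+(P-Q)^2$. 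Everything else is routine, and no use of the partial-isometry or Aluthge-transform machinery is needed.
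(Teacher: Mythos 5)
Your proposal is correct and follows essentially the same route as the paper's own proof: the lower bound $w^2(A)\geq\frac{1}{2}\max\{\|\Re(A)+\Im(A)\|^2,\|\Re(A)-\Im(A)\|^2\}$, the identity $\max\{a,b\}=\frac{a+b}{2}+\frac{|a-b|}{2}$, and then norm subadditivity together with $(\Re(A)+\Im(A))^2+(\Re(A)-\Im(A))^2=A^*A+AA^*$. The only cosmetic difference is that you derive the intermediate bound from the scalar parallelogram identity rather than by squaring the estimate in Theorem~\ref{thn}, which changes nothing of substance.
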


\begin{proof}
As in the proof of Theorem \ref{thn}, we have
 $$w^2(A)\geq\frac{1}{2}\max\{\|\Re(A)+\Im(A)\|^2,\|\Re(A)-\Im(A)\|^2\}.$$
 Now,
 \begin{eqnarray*}
 &&	\frac{1}{2}\max\{\|\Re(A)+\Im(A)\|^2,\|\Re(A)-\Im(A)\|^2\}\\
 	&=& \frac{1}{2}\left\lbrace\frac{ \|\Re(A)+\Im(A)\|^2+\|\Re(A)-\Im(A)\|^2}{2}+\frac{\mid \|\Re(A)+\Im(A)\|^2-\|\Re(A)-\Im(A)\|^2\mid}{2}\right\rbrace\\
 	&\geq&\frac{1}{2}\left\lbrace\frac{ \|(\Re(A)+\Im(A))^2+(\Re(A)-\Im(A))^2\|}{2}+\frac{\mid \|\Re(A)+\Im(A)\|^2-\|\Re(A)-\Im(A)\|^2\mid}{2}\right\rbrace\\ &=&\frac{\|A^*A+AA^*\|}{4}+\frac{\mid \|\Re(A)+\Im(A)\|^2-\|\Re(A)-\Im(A)\|^2\mid}{4}.
 \end{eqnarray*}
This completes the proof.
\end{proof}

	\begin{remark}
(i) Clearly, the inequality in Theorem \ref{thnn} refines the first inequality in (\ref{d}). \\
(ii)  If $ w^2(A)=\frac{\|A^*A+AA^*\|}{4}$, then $ \|\Re(A)+\Im(A)\|=\|\Re(A)-\Im(A)\|,$ but the converse is not necessarily true.\\
(ii) In \cite[Th. 2.9]{PK21}, Bhunia and Paul proved that if $ A\in\mathcal{B}(\mathcal{H})$, then  
\begin{eqnarray}\label{pk2}
	w^2(A)&\geq& \frac{1}{4} \left \|A^*A+AA^* \right\| +  \frac{1}{2}\mid \|\Re(A)\|^2-\|\Im(A)\|^2 \mid.
\end{eqnarray}
Considering the same examples as in Remark \ref{remark1}(ii), we conclude that the inequality in Theorem \ref{thnn} and the inequality (\ref{pk2}) are not comparable, in general.

\end{remark}

Next we obtain an upper bound for the numerical radius which improve on (\ref{e}). For this purpose, first we need the following lemma, based on polarization principle. 

\begin{lemma}\label{lem1}
	Let $ A \in \mathcal{B}(\mathcal{H})$, and let $ x,y \in \mathcal{H}$. Then 
	
	\begin{eqnarray*}
	\langle Ax,x \rangle &=&\frac{\langle A(x+y),x+y \rangle - \langle A(x-y),x-y \rangle}{4}\\
	&& +{\rm i} \frac{\langle A(x+{\rm i} y), x+{\rm i} y \rangle-\langle A(x-{\rm i} y),x-{\rm i} y\rangle}{4}.
	\end{eqnarray*}
	
\end{lemma}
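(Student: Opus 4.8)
The plan is to obtain this identity by a purely algebraic expansion, since it is exactly the polarization identity for the sesquilinear form $(x,y)\mapsto\langle Ax,y\rangle$; no analytic or operator-theoretic input is needed. I should first flag that, as written, the left-hand side ought to read $\langle Ax,y\rangle$ rather than $\langle Ax,x\rangle$: the right-hand side is the standard polarization sum $\tfrac14\sum_{k=0}^{3}{\rm i}^{k}\langle A(x+{\rm i}^{k}y),x+{\rm i}^{k}y\rangle$, which recovers $\langle Ax,y\rangle$ and not $\langle Ax,x\rangle$. Accordingly, I would prove that $\langle Ax,y\rangle$ equals the displayed right-hand side. The only convention to fix at the outset is that $\langle\cdot,\cdot\rangle$ is linear in its first slot and conjugate-linear in its second, so that $\langle Au,{\rm i}v\rangle=-{\rm i}\langle Au,v\rangle$ while $\langle A({\rm i}u),v\rangle={\rm i}\langle Au,v\rangle$.

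First I would expand the four inner products using the linearity of $A$ and the sesquilinearity of the inner product. For the first bracket, $\langle A(x+y),x+y\rangle-\langle A(x-y),x-y\rangle$, the two diagonal contributions $\langle Ax,x\rangle$ and $\langle Ay,y\rangle$ appear with opposite signs in the two terms and hence cancel, leaving $2\langle Ax,y\rangle+2\langle Ay,x\rangle$; after dividing by $4$ this bracket contributes $\tfrac12\bigl(\langle Ax,y\rangle+\langle Ay,x\rangle\bigr)$. For the second bracket, $\langle A(x+{\rm i}y),x+{\rm i}y\rangle-\langle A(x-{\rm i}y),x-{\rm i}y\rangle$, the diagonal terms again cancel (the factor $|{\rm i}|^{2}=1$ makes the two $\langle Ay,y\rangle$ pieces equal), while the cross terms pick up the factors $\pm{\rm i}$ coming from conjugate-linearity, leaving $-2{\rm i}\langle Ax,y\rangle+2{\rm i}\langle Ay,x\rangle$; multiplying by the prefactor ${\rm i}/4$ turns this into $\tfrac12\bigl(\langle Ax,y\rangle-\langle Ay,x\rangle\bigr)$.

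Adding the two contributions, the term $\langle Ay,x\rangle$ cancels and the coefficient of $\langle Ax,y\rangle$ becomes $1$, which gives the claimed identity. The computation has no genuine obstacle; the single point that requires care is the bookkeeping of the factors of ${\rm i}$ produced by conjugate-linearity in the second argument, since a sign error there would combine the cross terms the wrong way. Once the inner-product convention is fixed this is routine, which is precisely why the statement is recorded as a lemma rather than argued in detail — its role is simply to feed the polarization principle into the numerical-radius estimate that follows.
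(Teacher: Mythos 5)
Your proof is correct and is the standard argument: the paper states this lemma without proof, noting only that it is ``based on polarization principle,'' so your direct sesquilinear expansion supplies exactly the routine computation the authors leave implicit. Your flag about the misprint is also right --- the left-hand side should read $\langle Ax,y\rangle$ rather than $\langle Ax,x\rangle$, and this is confirmed by how the paper actually invokes the lemma in the proof of Theorem \ref{th1}, where it is applied with $x\mapsto e^{{\rm i}\theta}x$, $y\mapsto U^*x$ to yield $\langle e^{{\rm i}\theta}|A|x,U^*x\rangle$, i.e.\ precisely the form $\langle Ax,y\rangle$ that your expansion establishes.
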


\begin{theorem}
If $ A\in\mathcal{B}(\mathcal{H})$, then $$w(A)\leq\frac{1}{2} \left (\|A\|^2+w^2(\tilde{A})+w(|A|\tilde{A}+\tilde{A}|A|)\right )^{\frac{1}{2}}.$$
\label{th1}\end{theorem}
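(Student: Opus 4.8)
The plan is to route everything through the Aluthge transform via the polar decomposition. Write $A=U|A|$ and set $R=|A|^{1/2}$, so that $\tilde A=RUR$. The first thing I would record are the two identities that let $\tilde A$ enter: for a unit vector $x$, putting $p=|A|^{1/2}x$ and $q=|A|^{1/2}U^{*}x$, one has $\langle Ax,x\rangle=\langle p,q\rangle$ (from $\langle U|A|x,x\rangle=\langle |A|^{1/2}x,|A|^{1/2}U^{*}x\rangle$) and, more usefully,
\[ \langle A^{2}x,x\rangle=\langle Ax,A^{*}x\rangle=\langle UR^{2}x,R^{2}U^{*}x\rangle=\langle \tilde A\,p,q\rangle . \]
The second identity transfers the quadratic information about $A$ onto the off-diagonal form of the \emph{nicer} operator $\tilde A$, and it is the bridge on which the whole argument rests.

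Next I would use the standard fact $w(A)=\sup_{\theta}\|\Re(e^{i\theta}A)\|$ together with $\|H\|^{2}=\|H^{2}\|$ for self-adjoint $H$. Expanding the square gives
\[ 4\,\Re(e^{i\theta}A)^{2}=A^{*}A+AA^{*}+2\Re(e^{2i\theta}A^{2}), \]
so evaluating at a unit vector $x$ and invoking the bridge identity yields
\[ 4\,\|\Re(e^{i\theta}A)x\|^{2}=\|Ax\|^{2}+\|A^{*}x\|^{2}+2\Re(e^{2i\theta}\langle \tilde A\,p,q\rangle). \]
Taking the supremum over $x$ and $\theta$ turns the left-hand side into $4w^{2}(A)$, so the theorem reduces to bounding the right-hand expression by $\|A\|^{2}+w^{2}(\tilde A)+w(|A|\tilde A+\tilde A|A|)$.

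The engine for that bound is Lemma \ref{lem1}. Applied to $\langle \tilde A\,p,q\rangle$ it rewrites this off-diagonal quantity through the diagonal values $\langle \tilde A(p\pm q),p\pm q\rangle$ and $\langle \tilde A(p\pm iq),p\pm iq\rangle$, each controlled by $w(\tilde A)$ times a squared norm. Since $\|p\|^{2}=\langle|A|x,x\rangle$ and $\|q\|^{2}=\langle U|A|U^{*}x,x\rangle$ are both at most $\|A\|$, this is the natural source of the $\|A\|^{2}$ term; the genuinely second-order behaviour of $\tilde A$ on a single vector should produce $w^{2}(\tilde A)$, and the symmetric $p$--$q$ coupling is exactly what ought to assemble into the anticommutator $|A|\tilde A+\tilde A|A|$, hence $w(|A|\tilde A+\tilde A|A|)$.

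The hard part will be this last bookkeeping: a crude use of the polarization identity (bounding each diagonal term separately and summing) overcounts and yields only a weaker inequality, so the four diagonal terms and the two norm contributions $\|Ax\|^{2},\|A^{*}x\|^{2}$ must be recombined by completing a square so that one square packages into $w^{2}(\tilde A)$, the cross term into $w(|A|\tilde A+\tilde A|A|)$, and the coefficient of $\|A\|^{2}$ emerges as $1$ rather than the naive $2$. That no termwise estimate can work is already visible from $\|Ax\|^{2}+\|A^{*}x\|^{2}$ being allowed to exceed $\|A\|^{2}$ (e.g.\ for a rank-one projection at $x\in\operatorname{ran}A$), the excess being absorbed only by the $\tilde A$-terms. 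A secondary technical point is that $U$ is merely a partial isometry with $\ker U=\ker A$, so the steps treating $U^{*}U$ as the identity are valid only on $\overline{\operatorname{ran}}|A|$; I would handle this by restricting to that subspace or by approximating through invertible operators. Finally, as a consistency check I would confirm that the resulting bound refines Yamazaki's inequality (\ref{e}), which reduces to the elementary estimate $w(|A|\tilde A+\tilde A|A|)\le 2\|A\|\,w(\tilde A)$, obtained by noting $\Re(e^{i\theta}(|A|\tilde A+\tilde A|A|))=|A|H+H|A|$ with $H=\Re(e^{i\theta}\tilde A)$ and $\||A|H+H|A|\|\le 2\|A\|\|H\|\le 2\|A\|w(\tilde A)$.
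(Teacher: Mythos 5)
Your setup is correct as far as it goes: the identity $4\,\Re(e^{{\rm i}\theta}A)^2=A^*A+AA^*+2\Re(e^{2{\rm i}\theta}A^2)$, the bridge $\langle A^2x,x\rangle=\langle \tilde{A}p,q\rangle$ with $p=|A|^{1/2}x$, $q=|A|^{1/2}U^*x$, and the observation that termwise estimates cannot work are all sound. But the proposal has a genuine gap, and it sits exactly where you say the ``hard part'' is. Your reduction is in fact vacuous: since
\begin{equation*}
4w^2(A)=\sup_{\theta}\big\|4\,\Re(e^{{\rm i}\theta}A)^2\big\|=\sup_{\|x\|=1}\Big(\|Ax\|^2+\|A^*x\|^2+2\,|\langle A^2x,x\rangle|\Big),
\end{equation*}
the statement you ``reduce to'' --- that this supremum is at most $\|A\|^2+w^2(\tilde{A})+w(|A|\tilde{A}+\tilde{A}|A|)$ --- is not an intermediate step but a verbatim restatement of the theorem. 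Everything after that point is aspiration rather than proof: ``should produce $w^2(\tilde A)$,'' ``ought to assemble into the anticommutator,'' ``must be recombined by completing a square.'' No such recombination is exhibited, and it is not clear one exists along your route, because you square $\Re(e^{{\rm i}\theta}A)$ \emph{first} and then try to inject $\tilde{A}$; the off-diagonal term $\langle\tilde{A}p,q\rangle$ carries the phase $e^{2{\rm i}\theta}$, while the anticommutator term in the target bound arises with phase $e^{{\rm i}\theta}$, so the pieces you have do not naturally reassemble into $(\,|A|+\Re(e^{{\rm i}\theta}\tilde{A})\,)^2$.

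The paper's proof avoids this by reversing the order of operations: it works at \emph{first} order, applying the polarization lemma to $\langle e^{{\rm i}\theta}|A|x,U^*x\rangle$, taking real parts (so only the difference of two diagonal terms survives), discarding the negative term by positivity of $|A|$, and then using $\|XX^*\|=\|X^*X\|$ with $X=(e^{-{\rm i}\theta}+U)|A|^{1/2}$ to obtain
\begin{equation*}
\Re\langle e^{{\rm i}\theta}Ax,x\rangle\;\leq\;\tfrac{1}{2}\big\||A|+\Re(e^{{\rm i}\theta}\tilde{A})\big\|.
\end{equation*}
Only \emph{then} does it square, using $\|H\|^2=\|H^2\|$ for the self-adjoint operator $H=|A|+\Re(e^{{\rm i}\theta}\tilde{A})$; expanding $H^2$ is precisely the ``completed square'' you are looking for, with $|A|^2$ giving $\|A\|^2$, $(\Re(e^{{\rm i}\theta}\tilde{A}))^2$ giving $w^2(\tilde{A})$, and the cross terms giving $\Re\big(e^{{\rm i}\theta}(|A|\tilde{A}+\tilde{A}|A|)\big)$, hence $w(|A|\tilde{A}+\tilde{A}|A|)$. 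That square becomes available only after the first-order bound, not from the second-order identity you start with. (Your final consistency check that the bound refines (\ref{e}) is fine and matches Remark \ref{rem1}, and your point about $U^*U$ being the projection onto $\overline{\operatorname{ran}}|A|$ is handled in the paper by $\ker U=\ker A$; neither of these repairs the missing central step.)
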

\begin{proof}
	First we note that $$ w(A) =\sup_{\theta\in\mathbb{R}}\|\Re(e^{\rm i\theta}A)\|=\sup_{\theta\in\mathbb{R}} w(\Re(e^{\rm i\theta}A)).$$ 
	Let $ A=U|A|$ be the polar decomposition of $A$. Then, by lemma \ref{lem1} we have,
	\begin{eqnarray*}
&&\langle e^{\rm i\theta} Ax,x\rangle=\langle e^{\rm i \theta}|A|x,U^*x\rangle\\ 
& =& \frac{1}{4} \left (\langle |A|(e^{\rm i \theta}x+U^*x),e^{\rm i \theta}x+U^*x \rangle - \langle |A|(e^{\rm i \theta}x-U^*x),e^{\rm i \theta}x-U^*x \rangle \right)\\
&+& \frac{{\rm i}}{4} \left (\langle |A|(e^{{\rm i} \theta}x+{\rm i}  U^*x),e^{{\rm i} \theta}x+{\rm i}  U^*x\rangle-\langle |A|(e^{{\rm i} \theta}x-{\rm i}  U^*x),e^{{\rm i}\theta}x-{\rm i} U^*x\rangle \right ). 
	\end{eqnarray*}
 Therefore, we have   
 \begin{eqnarray*}
 	Re\langle e^{{\rm i}\theta} Ax,x\rangle &= & \frac{1}{4}(\langle |A|(e^{{\rm i}\theta}x+U^*x),e^{{\rm i}\theta}x+U^*x \rangle - \langle |A|(e^{{\rm i}\theta}x-U^*x),e^{{\rm i}\theta}x-U^*x \rangle)\\
 	&\leq& \frac{1}{4}\langle |A|(e^{{\rm i}\theta}x+U^*x),e^{{\rm i}\theta}x+U^*x \rangle \\ 
 	&=& \frac{1}{4}\langle (e^{-{\rm i}\theta}+U)|A|(e^{{\rm i}\theta}+U^*)x,x \rangle \\
 	 & \leq& \frac{1}{4} \|(e^{-{\rm i}\theta}+U) |A|(e^{{\rm i}\theta}+U^*)\|  \\ 
 	 & =& \frac{1}{4} \| |A|^\frac{1}{2}(e^{{\rm i}\theta}+U^*)(e^{-{\rm i}\theta}+U)|A|^\frac{1}{2}\|,\hspace{.5cm}  (\|A^*A\|=\|AA^*\|) \\ 
 	 & =& \frac{1}{4} \|2|A|+e^{{\rm i}\theta}\tilde{A}+e^{-{\rm i}\theta}{\tilde{A}}^*\|  \\ & =& \frac{1}{2}\||A|+\Re(e^{{\rm i}\theta}\tilde{A})\| \\ 
 	 & =&\frac{1}{2}\left \|\left (|A|+\Re(e^{{\rm i}\theta}\tilde{A}) \right)^2\right \|^\frac{1}{2} \\
 	  & =& \frac{1}{2} \left \||A|^2+(\Re  (e^{{\rm i}\theta}\tilde{A}))^2+|A| \Re ( e^{{\rm i}\theta}\tilde{A})+\Re ( e^{{\rm i}\theta}\tilde{A})|A|\right \|^\frac{1}{2} \\ 
 	  & = & \frac{1}{2} \left \||A|^2+(\Re  (e^{{\rm i}\theta}\tilde{A}))^2 + \Re(e^{{\rm i}\theta}(|A|\tilde{A}+\tilde{A}|A|))\right \|^\frac{1}{2} \\ &\leq&\frac{1}{2}\left(\|A\|^2+\|\Re  (e^{{\rm i}\theta}\tilde{A}) \|^2+\| \Re(e^{{\rm i}\theta}(|A|\tilde{A}+\tilde{A}|A|))\|\right)^\frac{1}{2} \\
 	   &\leq& \frac{1}{2}\left(\|A\|^2+w^2(\tilde{A})+ w(|A|\tilde{A}+\tilde{A}|A|)\right)^\frac{1}{2}.
    \end{eqnarray*}
Since $Re\langle e^{{\rm i}\theta} Ax,x\rangle=\langle \Re(e^{{\rm i}\theta} A)x,x\rangle$, so taking supremum over $\theta \in \mathbb{R}$, we get
$$w(A) \leq \frac{1}{2}\left(\|A\|^2+w^2(\tilde{A})+ w(|A|\tilde{A}+\tilde{A}|A|)\right)^{ \frac{1}{2}},$$ as desired.
\end{proof}
	
\begin{remark} \label{rem1}
From \cite{G} we have if $ A,X\in\mathcal{B}(\mathcal{H})$, then $w(A^*X+XA)\leq 2 \|A\|w(X) $ and so
\begin{eqnarray*}
\|A\|^2+w^2(\tilde{A})+w(|A|\tilde{A}+\tilde{A}|A|)  &\leq&\|A\|^2+w^2(\tilde{A})+2\||A|\|w(\tilde{A})\\
&=&\|A\|^2+w^2(\tilde{A})+2\|A\|w(\tilde{A}) \\ 
&=&(\|A\|+w(\tilde{A}))^2.
\end{eqnarray*}	
Therefore,  $$w(A)\leq\frac{1}{2}\left(\|A\|^2+w^2(\tilde{A})+ w(|A|\tilde{A}+\tilde{A}|A|)\right)^\frac{1}{2}\leq\frac{1}{2}(\|A\|+w(\tilde{A})).$$
Thus the inequality in Theorem \ref{th1} is stronger than the inequality (\ref{e}). 
\end{remark}

Next we need the following two lemmas, first one known as Heinz inequality and second one known as Buzano's inequality.

\begin{lemma}(\cite{J})\label{p1}
 Let $ A \in\mathcal{B}(\mathcal{H})$. Then, for all $x,y\in\mathcal{H}$, we have
  $$ |\langle Ax,y \rangle|^2\leq\langle |A|^{2\alpha}x,x \rangle \langle |A^*|^{2(1-\alpha)}y,y \rangle, $$ 
  for all $ \alpha \in [0,1]$.
\end{lemma}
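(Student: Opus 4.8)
The plan is to prove this mixed Schwarz (Heinz-type) inequality by combining the polar decomposition of $A$ with the ordinary Cauchy--Schwarz inequality, treating the interior exponents $0<\alpha<1$ via the continuous functional calculus and disposing of the two endpoints separately. The endpoints are quick: for $\alpha=1$ the right-hand side is $\langle |A|^{2}x,x\rangle\langle y,y\rangle=\|Ax\|^{2}\|y\|^{2}$, so the claim is just $|\langle Ax,y\rangle|\le\|Ax\|\,\|y\|$; for $\alpha=0$ one argues symmetrically from $|\langle Ax,y\rangle|=|\langle x,A^{*}y\rangle|\le\|x\|\,\|A^{*}y\|$ together with $\langle |A^{*}|^{2}y,y\rangle=\|A^{*}y\|^{2}$. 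So only $0<\alpha<1$ requires work.

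For $0<\alpha<1$, I would write the polar decomposition $A=U|A|$ and split the exponent as $|A|=|A|^{1-\alpha}|A|^{\alpha}$, so that
$$\langle Ax,y\rangle=\langle U|A|^{1-\alpha}|A|^{\alpha}x,y\rangle=\langle |A|^{\alpha}x,\,|A|^{1-\alpha}U^{*}y\rangle ,$$
using that $|A|^{1-\alpha}$ is self-adjoint. Applying Cauchy--Schwarz to this inner product gives
$$|\langle Ax,y\rangle|^{2}\le\big\||A|^{\alpha}x\big\|^{2}\,\big\||A|^{1-\alpha}U^{*}y\big\|^{2}=\langle |A|^{2\alpha}x,x\rangle\,\langle U|A|^{2(1-\alpha)}U^{*}y,y\rangle .$$
Thus the whole problem reduces to identifying the operator $U|A|^{2(1-\alpha)}U^{*}$ with $|A^{*}|^{2(1-\alpha)}$, after which the stated inequality follows at once by substitution.

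This identification is the one delicate point. From $A=U|A|$ one gets $AA^{*}=U|A|^{2}U^{*}$, so $|A^{*}|^{2}=U|A|^{2}U^{*}$. Since $U$ is the partial isometry with $\ker U=\ker A=\ker|A|$, the operator $U^{*}U$ is the orthogonal projection onto $(\ker|A|)^{\perp}$, and hence $U^{*}U|A|=|A|$. It follows that $(U|A|U^{*})^{2}=U|A|^{2}U^{*}=|A^{*}|^{2}$ with $U|A|U^{*}\ge 0$, so by uniqueness of the positive square root $|A^{*}|=U|A|U^{*}$. The same relation $U^{*}U|A|=|A|$ lets the powers telescope, giving $|A^{*}|^{n}=U|A|^{n}U^{*}$ for every integer $n\ge 1$, and therefore $p(|A^{*}|)=U\,p(|A|)\,U^{*}$ for every polynomial $p$ with $p(0)=0$. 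Approximating $t\mapsto t^{2(1-\alpha)}$ uniformly by such polynomials on the spectrum of $|A|$ (permissible precisely because $\alpha<1$ forces this function to vanish at $0$) yields $|A^{*}|^{2(1-\alpha)}=U|A|^{2(1-\alpha)}U^{*}$, completing the argument.

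I expect this last functional-calculus identity to be the main obstacle, since it is exactly the place where the possible non-invertibility of $A$ and the fact that $U$ is only a partial isometry must be handled carefully: the condition $p(0)=0$ is essential (for a general partial isometry $UU^{*}\ne I$, so the relation fails at the constant term), and this is also what forces the endpoint exponents $\alpha\in\{0,1\}$ to be dealt with separately by the elementary Cauchy--Schwarz arguments above.
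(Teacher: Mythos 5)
Your proof is correct, but there is nothing in the paper to compare it against: the paper states this lemma without any proof, quoting it as a known result of Kato \cite{J}. Your argument is in fact the standard modern proof of Kato's mixed Schwarz inequality: split $|A|=|A|^{1-\alpha}|A|^{\alpha}$ inside the polar decomposition, apply Cauchy--Schwarz to $\langle Ax,y\rangle=\langle |A|^{\alpha}x,\,|A|^{1-\alpha}U^{*}y\rangle$, and identify $U|A|^{2(1-\alpha)}U^{*}$ with $|A^{*}|^{2(1-\alpha)}$. The delicate points are all handled properly: you justify $|A^{*}|=U|A|U^{*}$ via uniqueness of the positive square root (using $U^{*}U|A|=|A|$), you isolate exactly why the transfer $f(|A^{*}|)=Uf(|A|)U^{*}$ requires $f(0)=0$ (for a partial isometry $UU^{*}\neq I$, so constants do not transfer), and you dispose of the endpoints $\alpha\in\{0,1\}$, where the exponent $2(1-\alpha)$ or $2\alpha$ vanishes and this transfer would break down, by elementary Cauchy--Schwarz. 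One cosmetic remark: the uniform polynomial approximation of $t\mapsto t^{2(1-\alpha)}$ should be taken on an interval containing both $\sigma(|A|)$ and $\sigma(|A^{*}|)$, for instance $[0,\|A\|]$, rather than on $\sigma(|A|)$ alone; this is harmless here, since both spectra lie in that interval and your approximating polynomials agree with the target function at $0$ exactly.
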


\begin{lemma}(\cite{I})\label{p2}
	Let $a,b,e\in\mathcal{H}$ with $\|e\|=1$.
	 $$ |\langle a,e \rangle \langle e,b\rangle |  \leq\frac{1}{2}(|\langle a,b \rangle|+\|a\|\|b\|).$$
\end{lemma}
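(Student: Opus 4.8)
The plan is to recast the scalar product $\langle a,e\rangle\langle e,b\rangle$ as a single inner product governed by the rank-one orthogonal projection onto the line spanned by $e$, and then to exploit the fact that the reflection associated with that projection is an isometry. Concretely, I would introduce the operator $P\in\mathcal{B}(\mathcal{H})$ defined by $Px=\langle x,e\rangle e$. Since $\|e\|=1$, a one-line check shows that $P$ is an orthogonal projection, i.e. $P^{*}=P$ and $P^{2}=P$. The first step is then the identity $\langle a,e\rangle\langle e,b\rangle=\langle Pa,b\rangle$, which is immediate from $Pa=\langle a,e\rangle e$.

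The crucial step is the algebraic decomposition $P=\frac{1}{2}\bigl(I-(I-2P)\bigr)$ together with the observation that $U:=I-2P$ is a self-adjoint unitary. Indeed $U^{*}=U$ and $U^{2}=I-4P+4P^{2}=I$, so that $\|Ua\|=\|a\|$ for every $a\in\mathcal{H}$. Substituting this into the identity from the first step gives
\[
\langle Pa,b\rangle=\frac{1}{2}\langle a,b\rangle-\frac{1}{2}\langle Ua,b\rangle.
\]

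The last step is a routine application of the triangle inequality followed by the Cauchy--Schwarz inequality:
\[
|\langle a,e\rangle\langle e,b\rangle|=|\langle Pa,b\rangle|\leq\frac{1}{2}|\langle a,b\rangle|+\frac{1}{2}|\langle Ua,b\rangle|\leq\frac{1}{2}|\langle a,b\rangle|+\frac{1}{2}\|Ua\|\,\|b\|=\frac{1}{2}\bigl(|\langle a,b\rangle|+\|a\|\,\|b\|\bigr),
\]
where the final equality uses $\|Ua\|=\|a\|$. This is exactly the asserted bound.

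As for difficulty, there is no genuine analytic obstacle here: every estimate reduces to a single triangle or Cauchy--Schwarz step, and no spectral machinery is needed. The only real content is the algebraic insight in the second step, namely recognizing that $P$ differs from $\frac{1}{2}I$ by one half of the reflection $I-2P$, whose operator norm equals one; this is what converts the product of two inner products into the average of $\langle a,b\rangle$ and an isometrically-controlled term. An alternative, operator-free route would split $a$ and $b$ into their components along $e$ and orthogonal to $e$ and estimate directly, but that approach is more computational and less transparent, so I would prefer the projection--reflection argument above.
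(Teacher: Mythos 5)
Your proof is correct, and every step checks out: $P$ defined by $Px=\langle x,e\rangle e$ is indeed an orthogonal projection when $\|e\|=1$, the identity $\langle a,e\rangle\langle e,b\rangle=\langle Pa,b\rangle$ is immediate, $U=I-2P$ satisfies $U^{*}=U$ and $U^{2}=I$ so it is an isometry, and the final chain of triangle inequality plus Cauchy--Schwarz delivers exactly the claimed bound. Note, however, that there is no proof in the paper to compare against: the authors state this lemma (Buzano's inequality) as a known result, citing Buzano's original article, and never prove it. So your argument is not an alternative to the paper's route --- it is a genuine, self-contained proof where the paper offers only a citation. The reflection trick you use (writing the rank-one projection as $P=\tfrac{1}{2}(I-U)$ with $U$ a self-adjoint unitary) is in fact the classical slick proof of Buzano's inequality, essentially the one given by Fujii and Kubo; it is shorter and more transparent than Buzano's original argument or the computational route you mention at the end (splitting $a$ and $b$ into components along $e$ and orthogonal to $e$). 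Supplying such a proof makes the paper more self-contained, which is a net gain; the only caveat is that when a result is attributed to the literature, your write-up should still acknowledge that the statement and the idea of proof are known rather than new.
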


Now we are in a position to prove an upper bound which improve on both the upper bounds in  (\ref{d}) and (\ref{b}).

\begin{theorem}	\label{th5}
	If $ A\in\mathcal{B}(\mathcal{H})$, then 
	$$ w^2(A)\leq\frac{1}{4} \left \| |A|^{4\alpha}+|A^*|^{4(1-\alpha)} \right \|+\frac{1}{2} w \left( |A^*|^{2(1-\alpha)}|A|^{2\alpha}\right), $$ 
	for all $ \alpha \in [0,1]$.
\end{theorem}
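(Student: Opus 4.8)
The plan is to combine the Heinz inequality (Lemma~\ref{p1}) with Buzano's inequality (Lemma~\ref{p2}), which is precisely why both have been recorded immediately before the statement. I would start from $w(A)=\sup_{\|x\|=1}|\langle Ax,x\rangle|$ and fix a unit vector $x\in\mathcal{H}$. Applying Lemma~\ref{p1} with $y=x$ gives the pointwise bound
$$|\langle Ax,x\rangle|^2\leq \langle |A|^{2\alpha}x,x\rangle\,\langle |A^*|^{2(1-\alpha)}x,x\rangle.$$
The governing idea is to regard the right-hand side as a product of two inner products of the form $\langle a,e\rangle\langle e,b\rangle$ and feed it into Buzano's inequality.

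Concretely, I would set $a=|A|^{2\alpha}x$, $b=|A^*|^{2(1-\alpha)}x$ and $e=x$. Since the operators involved are positive, both factors are real, so $\langle |A|^{2\alpha}x,x\rangle=\langle a,e\rangle$ and $\langle |A^*|^{2(1-\alpha)}x,x\rangle=\langle e,b\rangle$, whence Lemma~\ref{p2} yields
$$|\langle Ax,x\rangle|^2\leq |\langle a,e\rangle\langle e,b\rangle|\leq\frac{1}{2}\big(|\langle a,b\rangle|+\|a\|\,\|b\|\big).$$
Next I would identify the three quantities appearing here: $\langle a,b\rangle=\langle |A^*|^{2(1-\alpha)}|A|^{2\alpha}x,x\rangle$, while $\|a\|^2=\langle |A|^{4\alpha}x,x\rangle$ and $\|b\|^2=\langle |A^*|^{4(1-\alpha)}x,x\rangle$. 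Applying the arithmetic--geometric mean inequality to $\|a\|\,\|b\|=\sqrt{\langle |A|^{4\alpha}x,x\rangle}\,\sqrt{\langle |A^*|^{4(1-\alpha)}x,x\rangle}$ bounds it above by $\tfrac{1}{2}\langle(|A|^{4\alpha}+|A^*|^{4(1-\alpha)})x,x\rangle$.

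Finally I would take the supremum over all unit vectors $x$. The supremum of the first term is $\tfrac{1}{2}w(|A^*|^{2(1-\alpha)}|A|^{2\alpha})$ by definition of the numerical radius, while the second term is the numerical range of the positive operator $|A|^{4\alpha}+|A^*|^{4(1-\alpha)}$, whose supremum is $\tfrac{1}{4}\||A|^{4\alpha}+|A^*|^{4(1-\alpha)}\|$; this last identification uses that the numerical radius of a positive operator coincides with its norm. Combining these yields exactly the claimed bound, uniformly in $\alpha\in[0,1]$. I do not expect a serious obstacle: the only delicate points are the correct choice of $a,b,e$ when invoking Buzano and the observation that each inner product of a positive operator against $x$ is real, so that no conjugation issues arise, together with the concluding use of $w(\cdot)=\|\cdot\|$ on positive operators.
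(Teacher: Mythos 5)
Your proposal is correct and follows essentially the same route as the paper's own proof: Heinz's inequality (Lemma \ref{p1}) with $y=x$, then Buzano's inequality (Lemma \ref{p2}) applied with $a=|A|^{2\alpha}x$, $e=x$, $b=|A^*|^{2(1-\alpha)}x$, followed by the AM--GM bound $\|a\|\,\|b\|\leq\tfrac12\langle(|A|^{4\alpha}+|A^*|^{4(1-\alpha)})x,x\rangle$ and a supremum over unit vectors. The only cosmetic difference is that the paper bounds each term by the operator norm and numerical radius pointwise before taking the supremum, whereas you take the supremum at the end using $w(T)=\|T\|$ for positive $T$; these are interchangeable.
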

	
	\begin{proof}
		Let $x\in \mathcal{H}$ with $\|x\|=1$, then by Lemma \ref{p1}, we have
		\begin{eqnarray*}
			|\langle Ax,x \rangle|^2 &\leq& \langle |A|^{2\alpha}x,x \rangle\langle |A^*|^{2(1-\alpha)}x,x \rangle\\
			&=& \langle |A|^{2\alpha}x,x \rangle\langle x, |A^*|^{2(1-\alpha)}x \rangle\\ 
			&\leq& \frac{1}{2} \| |A|^{2\alpha}x\| \||A^*|^{2(1-\alpha)}x\|+\frac{1}{2}|\langle |A|^{2\alpha}x,|A^*|^{2(1-\alpha)}x \rangle|,\,\,(\textit{by Lemma \ref{p2}}) \\ 
			&=& \frac{1}{2} \langle |A|^{4\alpha}x,x\rangle^\frac{1}{2} \langle |A^*|^{4(1-\alpha)}x,x \rangle^\frac{1}{2}+\frac{1}{2}|\langle |A|^{2\alpha}x,|A^*|^{2(1-\alpha)}x \rangle| \\
			 &\leq&  \frac{1}{4} \left( \langle |A|^{4\alpha}x,x\rangle + \langle |A^*|^{4(1-\alpha)}x,x \rangle\right)+\frac{1}{2}|\langle |A|^{2\alpha}x,|A^*|^{2(1-\alpha)}x \rangle| \\ 
			 &=& \frac{1}{4}  \langle \left( |A|^{4\alpha} +  |A^*|^{4(1-\alpha)}\right) x,x \rangle+\frac{1}{2}|\langle |A^*|^{2(1-\alpha)}|A|^{2\alpha}x,x \rangle| \\ 
			 &\leq& \frac{1}{4} \| |A|^{4\alpha} +  |A^*|^{4(1-\alpha)}\| +\frac{1}{2} w\left( |A^*|^{2(1-\alpha)}|A|^{2\alpha}\right).
		\end{eqnarray*}
		Therefore, taking supremum over $x\in \mathcal{H}$, $\|x\|=1$, we get $$ w^2(A)\leq\frac{1}{4}\| |A|^{4\alpha}+|A^*|^{4(1-\alpha)} \|+\frac{1}{2} w \left( |A^*|^{2(1-\alpha)}|A|^{2\alpha}\right), $$ as desired.
	\end{proof}
	
In particular, considering $\alpha=\frac{1}{2}$ in Theorem \ref{th5}, we get the following inequality \cite[Cor. 2.6]{K} 
\begin{eqnarray}\label{eqnp1}
w^2(A)\leq\frac{1}{4}\| |A|^{2}+|A^*|^{2} \|+\frac{1}{2} w \left( |A^*||A|\right).
\end{eqnarray} 

Next, considering the minimum over $\alpha \in [0,1]$ in Theorem \ref{th5}, we get the following corollary.

\begin{cor}\label{pcor1}
	If $ A\in\mathcal{B}(\mathcal{H})$, then $$ w^2(A)\leq\min_{0\leq\alpha\leq 1}\left\lbrace\frac{1}{4} \left \| |A|^{4\alpha}+|A^*|^{4(1-\alpha)}\right \|+\frac{1}{2} w \left( |A^*|^{2(1-\alpha)}|A|^{2\alpha}\right)\right\rbrace. $$
\end{cor}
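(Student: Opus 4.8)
The plan is to obtain this as an immediate consequence of Theorem \ref{th5}, by optimizing the family of bounds established there over the free parameter $\alpha$. The key structural fact is that in Theorem \ref{th5} the quantity on the left, $w^2(A)$, does not depend on $\alpha$, whereas the inequality is asserted to hold for every $\alpha \in [0,1]$ simultaneously.

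Concretely, I would fix $A \in \mathcal{B}(\mathcal{H})$ and define
$$f(\alpha) = \frac{1}{4}\left\| |A|^{4\alpha} + |A^*|^{4(1-\alpha)} \right\| + \frac{1}{2}\, w\left( |A^*|^{2(1-\alpha)}|A|^{2\alpha} \right), \qquad \alpha \in [0,1].$$
Theorem \ref{th5} says precisely that $w^2(A) \leq f(\alpha)$ for each $\alpha \in [0,1]$. Since the left-hand side is one fixed real number independent of $\alpha$, it is a lower bound for the whole set $\{ f(\alpha) : \alpha \in [0,1] \}$, and therefore $w^2(A) \leq \inf_{0 \leq \alpha \leq 1} f(\alpha)$. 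This already yields the asserted bound with the infimum in place of the minimum.

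To replace the infimum by a minimum it suffices to show that $f$ attains its infimum on the compact interval $[0,1]$, for which I would establish continuity of $f$. The operator norm and the numerical radius are both Lipschitz with respect to $\|\cdot\|$, and products of norm-continuous operator-valued maps are again norm-continuous, so it is enough to check that $\alpha \mapsto |A|^{4\alpha}$ and $\alpha \mapsto |A^*|^{4(1-\alpha)}$ are norm-continuous. For a positive operator $P$ this reduces, via the continuous functional calculus, to the uniform estimate $\| P^s - P^t \| = \sup_{\lambda \in \sigma(P)} | \lambda^s - \lambda^t |$, which tends to $0$ as $s \to t$ provided $t > 0$. Granting continuity, a continuous real-valued function on $[0,1]$ attains its infimum, so the infimum is a minimum and the corollary follows.

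I expect the endpoints $\alpha = 0$ and $\alpha = 1$ to be the only delicate point. There the relevant exponent drops to $0$, forcing $|A|^{0} = I$ (resp. $|A^*|^{0} = I$), and if $A$ fails to be bounded below then $0 \in \sigma(|A|)$ is not isolated and $\alpha \mapsto |A|^{4\alpha}$ need not be norm-continuous at $\alpha = 0$, since $\sup_{\lambda} |\lambda^{4\alpha} - 1|$ does not vanish as $\alpha \to 0^{+}$. Consequently the cleanest fully rigorous content is the infimum version, and the passage to the minimum is automatic on the interior $(0,1)$ and hence on all of $[0,1]$ in the generic situation, for instance whenever $A$ is bounded below or $\dim \mathcal{H} < \infty$. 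I would therefore present the infimum inequality as the rigorous core and record that continuity of $f$ on $[0,1]$ upgrades it to the minimum as stated.
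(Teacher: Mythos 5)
Your proposal is correct and follows essentially the same route as the paper, whose entire proof of Corollary \ref{pcor1} is the one-line observation that Theorem \ref{th5} holds for every $\alpha\in[0,1]$ while the left-hand side $w^2(A)$ is independent of $\alpha$, so one may pass to the minimum. Your additional discussion of norm-continuity of $\alpha\mapsto|A|^{4\alpha}$ and of infimum versus minimum at the endpoints is extra rigor that the paper silently omits; the unconditional content in both cases is the infimum inequality, which is all your argument (and the paper's) actually needs.
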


Clearly, we have 
$$ \min_{0\leq\alpha\leq 1}\left\lbrace\frac{1}{4} \left \| |A|^{4\alpha}+|A^*|^{4(1-\alpha)} \right\|+\frac{1}{2} w \left( |A^*|^{2(1-\alpha)}|A|^{2\alpha}\right)\right\rbrace \leq \frac{1}{4}\| |A|^{2}+|A^*|^{2} \|+\frac{1}{2} w \left( |A^*||A|\right).$$
In order to appreciate the inequality in Corollary \ref{pcor1} we give the following example. 
\begin{example}
	Let $$ A=  \begin{pmatrix}
	0 & 1 & 0 \\
	0  & 0 & 2\\
	0  & 0  & 0\\
	\end{pmatrix}.$$
Then by simple calculation, we have	
	 $$ |A|^{4\alpha}+ |A^*|^{4(1-\alpha)}=\begin{pmatrix}
	1 & 0 & 0 \\
	0 & 1+16^{1-\alpha} & 0 \\
	0 & 0 &  16^{\alpha} \\
	\end{pmatrix} \,\, \textit{and} \,\,\,\, |A^*|^{2(1-\alpha)} |A|^{2\alpha}=\begin{pmatrix}
	1 & 0 & 0 \\
	0 & 4^{1-\alpha} & 0 \\
	0 & 0 &  0 \\
	\end{pmatrix}.$$
Therefore, we have	
	\begin{eqnarray*}
	 \| |A|^{4\alpha}+ |A^*|^{4(1-\alpha)}\|&=& \begin{cases}
		1+16^{1-\alpha} \hspace{.5cm} \textit{if}\,\, \,\, 0\leq\alpha\leq r_0 \\
		16^{\alpha}  \hspace{1cm} \textit{if}\,\, \,\, r_0\leq\alpha\leq1,
	\end{cases}\end{eqnarray*}
where $16^{r_0}=\frac{1+\sqrt{65}}{2}$. Also,
\begin{eqnarray*}
w(|A^*|^{2(1-\alpha)} |A|^{2\alpha})=\begin{cases}
	4^{1-\alpha}\hspace{.5cm} \textit{if}\,\, \,\, 0\leq\alpha<1\\
	1 \hspace{1cm} \textit{if}\,\, \,\, \alpha=1.
\end{cases}
\end{eqnarray*}
Now, 
\begin{eqnarray*}	
	\min_{0\leq\alpha\leq r_0} \left\lbrace\frac{1+16^{1-\alpha}}{4}+\frac{4^{1-\alpha}}{2}\right\rbrace&=&\left\lbrace\frac{33+\sqrt{65}}{4(1+\sqrt{65})}+\frac{4\sqrt{2}}{2\sqrt{1+\sqrt{65}}}\right\rbrace\approx2.0724.\\
	\min_{r_0\leq\alpha\leq 1} \left\lbrace\frac{16^{\alpha}}{4}+\frac{4^{1-\alpha}}{2}\right\rbrace&=&\left\lbrace\frac{1+\sqrt{65}}{8}+\frac{4\sqrt{2}}{2\sqrt{1+\sqrt{65}}}\right\rbrace\approx2.0724.	\end{eqnarray*}
	 Thus, $$ \min_{0\leq\alpha\leq 1}\left\lbrace\frac{1}{4}\| |A|^{4\alpha}+|A^*|^{4(1-\alpha)} \|+\frac{1}{2} w \left( |A^*|^{2(1-\alpha)}|A|^{2\alpha}\right)\right\rbrace\approx2.0724. $$ 
Also, we have $$\frac{1}{4}\| |A|^{2}+|A^*|^{2} \|+\frac{1}{2} w \left( |A^*||A|\right)=\frac{9}{4}=2.25.$$	 
Hence, for the matrix $A$,
$$ \min_{0\leq\alpha\leq 1}\left\lbrace\frac{1}{4}\| |A|^{4\alpha}+|A^*|^{4(1-\alpha)} \|+\frac{1}{2} w \left( |A^*|^{2(1-\alpha)}|A|^{2\alpha}\right)\right\rbrace <\frac{1}{4}\| |A|^{2}+|A^*|^{2} \|+\frac{1}{2} w \left( |A^*||A|\right).$$ 
	\end{example}

Thus the inequality in Corollary \ref{pcor1} is a refinement of  the inequality (\ref{eqnp1}) and  hence it also refines inequalities  (\ref{d}) and (\ref{b}).

\section{Numerical radius inequalities of product of operators}

\noindent We begin this section with the following two lemmas, first one can be found in \cite[p. 20]{A} and second one is obvious.

\begin{lemma} 
 Let $A\in\mathcal{B}(\mathcal{H})$  be posittive and  let $x\in\mathcal{H}$ with $\|x\|=1$. Then for all $r\geq1$, we have 
 $$\langle Ax,x\rangle^r\leq\langle A^rx,x\rangle. $$
\label{lem4}\end{lemma}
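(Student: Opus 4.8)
The plan is to reduce this operator inequality to a scalar Jensen inequality via the spectral theorem. Since $A$ is positive and bounded, its spectrum $\sigma(A)$ is contained in $[0,\|A\|]\subseteq[0,\infty)$, and the spectral theorem provides a representation $A=\int_{\sigma(A)} t\,dE(t)$, where $E$ is the associated projection-valued spectral measure. For the fixed unit vector $x$, I would introduce the scalar set function $\mu_x(S)=\langle E(S)x,x\rangle$ defined on the Borel subsets $S$ of $\sigma(A)$. Because $E$ is a spectral measure with $E(\sigma(A))=I$ and $\|x\|=1$, the measure $\mu_x$ is nonnegative with total mass $\mu_x(\sigma(A))=\langle x,x\rangle=1$, hence a genuine probability measure.

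With this setup in place, the functional calculus turns both sides of the claim into ordinary integrals against $\mu_x$, namely
$$\langle Ax,x\rangle=\int_{\sigma(A)} t\,d\mu_x(t) \qquad\text{and}\qquad \langle A^rx,x\rangle=\int_{\sigma(A)} t^r\,d\mu_x(t).$$
Thus the problem becomes exactly that of establishing $\left(\int t\,d\mu_x\right)^r\le \int t^r\,d\mu_x$ for the probability measure $\mu_x$.

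The key step is then a direct application of Jensen's inequality. For $r\ge 1$ the function $\varphi(t)=t^r$ is convex on $[0,\infty)$, a set containing the support of $\mu_x$. Since $\mu_x$ is a probability measure, Jensen's inequality gives $\varphi\!\left(\int t\,d\mu_x\right)\le\int \varphi(t)\,d\mu_x$, which is precisely the desired bound $\langle Ax,x\rangle^r\le\langle A^rx,x\rangle$.

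I do not expect any serious obstacle: this is simply the operator form of Jensen's (McCarthy's) inequality, and the only points needing care are verifying that $\mu_x$ is genuinely a probability measure, so that the scalar Jensen inequality is applicable, and observing that $\varphi(t)=t^r$ is convex exactly when $r\ge1$, which is where the hypothesis on $r$ is used. An alternative that avoids the spectral machinery would be to first establish the case $r=2$ directly from Cauchy--Schwarz, writing $\langle Ax,x\rangle=\|A^{1/2}x\|^2\le\|Ax\|$ for a unit vector $x$ so that $\langle Ax,x\rangle^2\le\|Ax\|^2=\langle A^2x,x\rangle$, and then treating integer powers inductively; but the Jensen route is cleaner and handles all real $r\ge1$ uniformly.
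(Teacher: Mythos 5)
Your proof is correct. The paper itself offers no proof of this lemma at all: it is stated as a known result and attributed to Simon's \emph{Trace Ideals and Their Applications} (p.\ 20), so there is no internal argument to compare against. Your spectral-theorem argument is the standard proof of this (McCarthy-type) inequality, and it is complete: $\mu_x(S)=\langle E(S)x,x\rangle$ is nonnegative because each $E(S)$ is an orthogonal projection, has total mass $\|x\|^2=1$, the functional calculus converts both sides into integrals against $\mu_x$, and convexity of $t\mapsto t^r$ on $[0,\infty)$ is precisely where the hypothesis $r\geq 1$ enters. One minor caution on your closing remark: the inductive alternative for integer $r$ is not as routine as stated, since passing from $\langle Ax,x\rangle^{k+1}\leq\langle A^kx,x\rangle\langle Ax,x\rangle$ to $\langle A^{k+1}x,x\rangle$ requires the Chebyshev-type correlation inequality $\langle A^kx,x\rangle\langle Ax,x\rangle\leq\langle A^{k+1}x,x\rangle$, which itself is most naturally proved with the same spectral measure; but since you only offered that as a secondary route, it does not affect the validity of your main argument.
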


\begin{lemma}
For all $ a,b\in\mathbb{R} $, we have $ |a+b|\leq\sqrt{2}|a+{\rm i} b|.$
\label{lem5}\end{lemma}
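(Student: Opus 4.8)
The plan is to reduce the claimed inequality to an obvious nonnegativity statement by squaring both sides. Since $a,b$ are real, the left-hand side $|a+b|$ is the absolute value of a real number, while the right-hand side $\sqrt{2}\,|a+{\rm i} b|=\sqrt{2}\sqrt{a^2+b^2}$ is manifestly nonnegative. Both sides being nonnegative, it suffices to verify the inequality between their squares, namely $(a+b)^2\leq 2(a^2+b^2)$.

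Expanding the left-hand side gives $(a+b)^2=a^2+2ab+b^2$, so the target inequality is equivalent to $a^2+2ab+b^2\leq 2a^2+2b^2$, that is, $0\leq a^2-2ab+b^2=(a-b)^2$. This last inequality holds for every choice of $a,b\in\mathbb{R}$, which establishes the lemma; moreover, equality occurs precisely when $a=b$.

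There is no genuine obstacle here: the statement is elementary (indeed the paper itself flags it as ``obvious''), and the only point warranting a word of care is the legitimacy of passing to squares, which is justified by the nonnegativity of both sides noted above. Equivalently, one could invoke the Cauchy--Schwarz inequality in the form $|a|+|b|\leq\sqrt{2}\sqrt{a^2+b^2}$ together with the triangle inequality $|a+b|\leq|a|+|b|$, but the direct reduction to $(a-b)^2\geq 0$ is the shortest route. I anticipate this lemma will subsequently be paired with the Cartesian-type splittings used earlier, so as to bound a sum of two real inner-product terms by the modulus of a single complex quantity.
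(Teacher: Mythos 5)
Your proof is correct: squaring both sides (both nonnegative) reduces the claim to $(a+b)^2\leq 2(a^2+b^2)$, i.e. $(a-b)^2\geq 0$, which is exactly the standard argument. The paper offers no proof at all—it dismisses the lemma as ``obvious''—so your write-up simply supplies the elementary verification the authors had in mind, including the correct equality condition $a=b$.
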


We now prove an improvement of (\ref{f}) .
 
\begin{theorem}\label{pth9}
Let $ A,B \in\mathcal{B}(\mathcal{H}$). Then, for all $r\geq1$, 
 $$ w^{2r}(B^*A)\leq\frac{1}{2} w^{2} \left (|A|^{2r}+{\rm i}|B|^{2r}\right ).$$ 
\end{theorem}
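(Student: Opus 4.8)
The plan is to estimate $|\langle B^*Ax,x\rangle|$ for a unit vector $x$ by the Cauchy--Schwarz inequality, push the estimate through the power $2r$ with the help of Lemma \ref{lem4}, and finally recast the resulting product of two nonnegative reals as half the squared modulus of the single quantity $\langle(|A|^{2r}+{\rm i}|B|^{2r})x,x\rangle$, whose supremum over unit vectors is exactly $w(|A|^{2r}+{\rm i}|B|^{2r})$.

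First I would fix $x\in\mathcal{H}$ with $\|x\|=1$ and write $\langle B^*Ax,x\rangle=\langle Ax,Bx\rangle$. The Cauchy--Schwarz inequality then gives $|\langle B^*Ax,x\rangle|\le\|Ax\|\,\|Bx\|=\langle|A|^2x,x\rangle^{1/2}\langle|B|^2x,x\rangle^{1/2}$, using $\|Ax\|^2=\langle A^*Ax,x\rangle=\langle|A|^2x,x\rangle$ and similarly for $B$. Raising to the power $2r$ yields $|\langle B^*Ax,x\rangle|^{2r}\le\langle|A|^2x,x\rangle^{r}\langle|B|^2x,x\rangle^{r}$. Now I would apply Lemma \ref{lem4} to the positive operators $|A|^2$ and $|B|^2$ (here $r\ge1$ is essential), giving $\langle|A|^2x,x\rangle^{r}\le\langle|A|^{2r}x,x\rangle$ and $\langle|B|^2x,x\rangle^{r}\le\langle|B|^{2r}x,x\rangle$, hence $|\langle B^*Ax,x\rangle|^{2r}\le\langle|A|^{2r}x,x\rangle\,\langle|B|^{2r}x,x\rangle$.

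For the final step I would set $p=\langle|A|^{2r}x,x\rangle\ge0$ and $q=\langle|B|^{2r}x,x\rangle\ge0$. The arithmetic--geometric mean inequality gives $pq\le\tfrac14(p+q)^2$, and Lemma \ref{lem5} (with $a=p$, $b=q$) gives $(p+q)^2\le2\,|p+{\rm i}q|^2$. Since $p+{\rm i}q=\langle(|A|^{2r}+{\rm i}|B|^{2r})x,x\rangle$, I would bound $|p+{\rm i}q|\le w(|A|^{2r}+{\rm i}|B|^{2r})$. Combining these gives $|\langle B^*Ax,x\rangle|^{2r}\le\tfrac12\,w^2(|A|^{2r}+{\rm i}|B|^{2r})$, and taking the supremum over all unit vectors $x$ completes the proof.

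There is no serious analytic obstacle here; the argument is a chain of standard estimates. The only places demanding care are the invocation of Lemma \ref{lem4} at the correct exponent (the hypothesis $r\ge1$ cannot be dropped) and the bookkeeping of constants in the passage $pq\le\tfrac14(p+q)^2\le\tfrac12|p+{\rm i}q|^2$, which is precisely what produces the factor $\tfrac12$ in the statement. Conceptually, the one genuine idea is to route the product $pq$ through the \emph{complex} combination $p+{\rm i}q$ rather than through the real sum $p+q$; this is what turns the bound into a numerical-radius quantity for $|A|^{2r}+{\rm i}|B|^{2r}$ and distinguishes the result from the operator-norm bound (\ref{f}).
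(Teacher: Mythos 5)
Your proposal is correct and follows essentially the same route as the paper's own proof: Cauchy--Schwarz, Lemma \ref{lem4} applied to $|A|^2$ and $|B|^2$, the arithmetic--geometric mean step $pq\le\tfrac14(p+q)^2$, and Lemma \ref{lem5} to pass to the complex combination $p+{\rm i}q$ before taking the supremum. The only difference is expository: you make explicit the intermediate identity $\|Ax\|^2=\langle|A|^2x,x\rangle$ before invoking Lemma \ref{lem4}, which the paper compresses into a single step.
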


\begin{proof}
	Let $x \in \mathcal{H}$ with $\|x\|=1$. Then we have,
\begin{eqnarray*}
	|\langle B^*Ax,x\rangle|^{2r}&=& |\langle Ax,Bx\rangle|^{2r}\\
&\leq& \|Ax\|^{2r}\|Bx\|^{2r} \\ 
&\leq&  \langle|A|^{2r}x,x\rangle\langle|B|^{2r}x,x\rangle,\,\, (\textit{by Lemma \ref{lem4}}) \\
 &\leq& \frac{1}{4}\left(  \langle|A|^{2r}x,x\rangle+\langle|B|^{2r}x,x\rangle\right)^2 \\ &\leq& \frac{1}{2}\left|  \langle|A|^{2r}x,x\rangle+{\rm i}\langle|B|^{2r}x,x\rangle\right|^2,\,\, (\textit{by Lemma \ref{lem5}})  \\ &=&\frac{1}{2}\left|  \langle\left(|A|^{2r}+{\rm i}|B|^{2r}\right) x,x\rangle\right|^2\\ 
 &\leq& \frac{1}{2}w^2 \left(|A|^{2r}+{\rm i}|B|^{2r}\right).
 \end{eqnarray*}
Taking supremum over $x \in \mathbb{H}$, $\|x\|=1$, we get
 $$ w^{2r}(B^*A)\leq\frac{1}{2} w^2(|A|^{2r}+{\rm i }|B|^{2r}),$$ as required.
\end{proof}

It is easy to verify that, $w^2(|A|^{2r}+{\rm i }|B|^{2r})\leq \||A|^{4r}+|B|^{4r} \|.$
Thus, the inequality in Theorem \ref{pth9} refines  the inequality (\ref{f}) for $ r\geq2. $

For an improvement of (\ref{h}) we need the following lemma, that can be found in \cite{ar}.

\begin{lemma}\label{2a}
Let $f$ be a non-negative increasing convex function on $ [0,\infty)$ and let $ A,B\in\mathcal{B}(\mathcal{H}) $ be positive. Then 
$$ \left \| f\left(\frac{A+B}{2}\right) \right \|\leq  \left \| \frac{f(A)+f(B)}{2} \right \|.$$
\end{lemma}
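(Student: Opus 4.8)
The plan is to reduce the operator-norm inequality to a scalar Jensen inequality by exploiting that the norm of a positive operator is the maximum of its spectrum. First I would record the elementary fact that if $T\in\mathcal{B}(\mathcal{H})$ is positive and $f$ is non-negative, increasing and continuous on $[0,\infty)$, then $f(T)$ is positive and $\|f(T)\|=f(\|T\|)$: the spectral mapping theorem gives $\sigma(f(T))=f(\sigma(T))$, so that $\|f(T)\|=\max f(\sigma(T))=f(\max\sigma(T))=f(\|T\|)$, where non-negativity of $f$ is used to identify the norm with $\max\sigma(f(T))$ and monotonicity of $f$ is used in the penultimate step. Applying this to $T=\frac{A+B}{2}$ reduces the claim to proving $f(\|\frac{A+B}{2}\|)\leq\|\frac{f(A)+f(B)}{2}\|$.

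Next I would pick unit vectors $x_n\in\mathcal{H}$ with $\langle\frac{A+B}{2}x_n,x_n\rangle\to\|\frac{A+B}{2}\|$, which exist because $\frac{A+B}{2}$ is positive and its norm equals $\sup_{\|x\|=1}\langle\frac{A+B}{2}x,x\rangle$. For each such $x_n$, writing $\langle\frac{A+B}{2}x_n,x_n\rangle=\frac{1}{2}(\langle Ax_n,x_n\rangle+\langle Bx_n,x_n\rangle)$, I would chain two convexity estimates. The scalar convexity of $f$ yields $f(\frac12(\langle Ax_n,x_n\rangle+\langle Bx_n,x_n\rangle))\le\frac12(f(\langle Ax_n,x_n\rangle)+f(\langle Bx_n,x_n\rangle))$, and then the Jensen-type operator inequality $f(\langle Cx,x\rangle)\le\langle f(C)x,x\rangle$, valid for positive $C$, convex $f$ and unit $x$, upgrades each scalar term to give $f(\langle\frac{A+B}{2}x_n,x_n\rangle)\le\langle\frac{f(A)+f(B)}{2}x_n,x_n\rangle\le\|\frac{f(A)+f(B)}{2}\|$. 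Letting $n\to\infty$ and using continuity of $f$ produces $f(\|\frac{A+B}{2}\|)\le\|\frac{f(A)+f(B)}{2}\|$, which together with the first step is exactly the assertion.

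For the Jensen step I would justify $f(\langle Cx,x\rangle)\le\langle f(C)x,x\rangle$ by integrating against the spectral measure: if $E$ denotes the spectral measure of the positive operator $C$, then $d\mu(\lambda)=d\langle E_\lambda x,x\rangle$ is a probability measure on $\sigma(C)\subseteq[0,\infty)$ since $\|x\|=1$, and as $\langle Cx,x\rangle=\int\lambda\,d\mu$ and $\langle f(C)x,x\rangle=\int f(\lambda)\,d\mu$, the classical Jensen inequality for convex $f$ gives the estimate.

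I expect the only genuine technical point to be the passage to the supremum in infinite dimensions: the maximizing vector need not exist, so the limiting argument with the sequence $x_n$ together with the continuity of $f$ is what makes the reduction rigorous. Everything else is a direct combination of the spectral norm identity $\|f(T)\|=f(\|T\|)$ with the scalar and spectral forms of Jensen's inequality, and it uses precisely the three hypotheses on $f$: non-negativity to keep $f(A),f(B)$ positive and to pin down the norm as the spectral maximum, monotonicity for the norm identity, and convexity for the two Jensen estimates.
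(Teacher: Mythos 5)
Your proof is correct, but there is nothing in the paper to compare it against: the paper does not prove this lemma, it imports it wholesale from the cited reference of Aujla and Silva, where it is obtained (in the matrix setting) from an eigenvalue weak-majorization result, $f\left(\frac{A+B}{2}\right)\prec_w \frac{f(A)+f(B)}{2}$ for monotone convex $f$, which yields the inequality simultaneously for every unitarily invariant norm. Your route is genuinely different and more elementary: you reduce to the scalar Jensen inequality via the identity $\|f(T)\|=f(\|T\|)$ (spectral mapping plus non-negativity and monotonicity of $f$) and the identity $\|T\|=\sup_{\|x\|=1}\langle Tx,x\rangle$ for positive $T$, then chain scalar convexity with the spectral-measure Jensen inequality $f(\langle Cx,x\rangle)\le\langle f(C)x,x\rangle$; the approximating sequence $x_n$ correctly handles the fact that the supremum need not be attained in infinite dimensions. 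What your argument buys is a self-contained proof valid verbatim on an infinite-dimensional Hilbert space, which is exactly the setting $\mathcal{B}(\mathcal{H})$ in which the lemma is applied in Theorem \ref{th4}; what the majorization route buys is the stronger conclusion for all unitarily invariant norms, at the cost of majorization machinery and a finite-dimensional (or approximation-based) framework. One small point you should make explicit: you invoke continuity of $f$, which is not among the stated hypotheses, but it is automatic here --- convexity makes $f$ continuous on $(0,\infty)$ and gives $\limsup_{t\to 0^+}f(t)\le f(0)$, while monotonicity gives $f(0)\le\liminf_{t\to 0^+}f(t)$, so $f$ is continuous at $0$ as well; this also ensures that $f(A)$ and $f(B)$ are defined by the continuous functional calculus.
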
 

\begin{theorem}\label{th4}
If $ A,B \in\mathcal{B}(\mathcal{H})$, then for $r\geq1$,
 $$w^{2r}(B^*A)\leq\frac{1}{2}\left(\frac{ \left \||B|^{2}|A|^{2}+|A|^{2}|B|^{2} \right \|}{2}\right)^r+\frac{1}{4} \left \| |B|^{4r}+|A|^{4r}\right\|.$$
\end{theorem}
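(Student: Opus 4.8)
The plan is to mimic the structure of the proof of Theorem~\ref{pth9}, starting from the pointwise Cauchy--Schwarz and Heinz-type estimate, but to retain more information by \emph{not} immediately collapsing the product $\langle|A|^{2r}x,x\rangle\langle|B|^{2r}x,x\rangle$ into a single numerical-radius term. Instead, I would invoke the elementary scalar identity $ab = \tfrac14\big((a+b)^2-(a-b)^2\big)$ together with the arithmetic--geometric comparison, so as to split the bound into a ``mixed'' piece that will become the first term on the right-hand side and a ``sum'' piece that will become the second. Concretely, for a unit vector $x\in\mathcal{H}$, begin with
\begin{eqnarray*}
|\langle B^*Ax,x\rangle|^{2r}
&\leq& \langle|A|^{2r}x,x\rangle\,\langle|B|^{2r}x,x\rangle,
\end{eqnarray*}
which is exactly the chain established inside Theorem~\ref{pth9} via Lemma~\ref{lem4}. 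The task then reduces to bounding the scalar product $s\,t$, where $s=\langle|A|^{2r}x,x\rangle$ and $t=\langle|B|^{2r}x,x\rangle$, by the two target quantities.

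Next I would write $st=\tfrac14(s+t)^2-\tfrac14(s-t)^2\le\tfrac14(s+t)^2$ and, separately, handle the convexity needed to reach the $r$-th power of $\||B|^2|A|^2+|A|^2|B|^2\|/2$. The key move is to apply Lemma~\ref{lem4} ``in reverse direction'' on the exponent: since $f(u)=u^r$ is increasing and convex on $[0,\infty)$ for $r\ge1$, one has $\big(\tfrac{s+t}{2}\big)^r\le \tfrac12(s^r+t^r)$ only at the scalar level, so to obtain an operator bound of the stated form I would instead route through Lemma~\ref{2a}. The idea is to recognize $\langle|A|^{2}x,x\rangle\langle|B|^{2}x,x\rangle$ (the $r=1$ building block) as controlled by $\langle\Re(|A|^2|B|^2)x,x\rangle$-type quantities, whose operator realization is $\tfrac12(|A|^2|B|^2+|B|^2|A|^2)$, and then raise to the power $r$ using the convexity Lemma~\ref{2a} applied to the two positive operators $|A|^{2r}$ and $|B|^{2r}$. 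This is the step where the first summand $\tfrac12\big(\tfrac{\||B|^2|A|^2+|A|^2|B|^2\|}{2}\big)^r$ emerges.

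The cleaner organization, which I expect the authors use, is to first split at the vector level as in the $ab$-identity: from $st\le \tfrac14(s+t)^2$ one keeps a $\tfrac14(s+t)^2$ and notes $\tfrac14(s+t)^2 \le \tfrac12\big(\tfrac{s+t}{2}\big)^{?}$ does not directly work, so instead I would bound one factor of $st$ by the mixed numerical-radius term and the symmetric remainder by the norm term. Explicitly, I would use $st=\min\{s,t\}\max\{s,t\}\le\tfrac12\big(\tfrac{s+t}{2}\big)\cdot(\text{something})+\tfrac14(s^2\!+\!t^2)$ only as a heuristic, and in the rigorous version apply Lemma~\ref{lem4} to the positive operator $\tfrac12(|A|^2|B|^2+|B|^2|A|^2)$ after first establishing the $r=1$ inequality
\[
|\langle B^*Ax,x\rangle|^{2}\le \tfrac12\Big\langle \tfrac{|B|^2|A|^2+|A|^2|B|^2}{2}x,x\Big\rangle+\tfrac14\big\langle(|B|^4+|A|^4)x,x\big\rangle,
\]
and only then promoting to general $r$ by the convexity of $u\mapsto u^r$ via Lemma~\ref{2a} on the first term together with $\tfrac14\|\,|B|^{4r}+|A|^{4r}\|$ on the second. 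Taking the supremum over unit vectors $x$ at the end converts the inner-product bounds into $w$ and operator-norm bounds and yields the claim.

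The main obstacle I anticipate is the interchange of ``raise to the $r$-th power'' with ``take the operator norm/numerical radius'': the scalar convexity $\big(\tfrac{s+t}{2}\big)^r\le\tfrac12(s^r+t^r)$ is pointwise in $x$, but to land on $\big(\tfrac{\||B|^2|A|^2+|A|^2|B|^2\|}{2}\big)^r$ I must pass the power through a norm, and this is exactly what Lemma~\ref{2a} licenses for positive operators and a non-negative increasing convex $f$. Matching the positive operators to which Lemma~\ref{2a} is applied (here $|A|^{2}$ and $|B|^{2}$, or their products) so that $f(u)=u^r$ produces the correct symmetrized term $\tfrac{|B|^2|A|^2+|A|^2|B|^2}{2}$ rather than a spurious cross term is the delicate bookkeeping; once that alignment is fixed, the remaining estimates are the same Heinz/Cauchy--Schwarz and AM--GM steps already used in Theorem~\ref{pth9}.
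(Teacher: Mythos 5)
Your overall skeleton---first establish the squared ($r=1$) inequality
\[
|\langle B^*Ax,x\rangle|^{2}\le \tfrac12\Big\langle \tfrac{|B|^2|A|^2+|A|^2|B|^2}{2}\,x,x\Big\rangle+\tfrac14\big\langle(|B|^4+|A|^4)x,x\big\rangle,
\]
then promote to general $r$ and take a supremum---is the paper's skeleton, and this displayed inequality is exactly what the paper derives (it is $\tfrac14\langle(|A|^2+|B|^2)^2x,x\rangle$ after expansion, obtained from Cauchy--Schwarz, AM--GM and Lemma~\ref{lem4}). But the mechanism you describe for the promotion step fails at precisely the point where care is needed. You propose to apply Lemma~\ref{lem4} to ``the positive operator $\tfrac12(|A|^2|B|^2+|B|^2|A|^2)$'' and to run Lemma~\ref{2a} on this first (mixed) term. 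That operator is self-adjoint but \emph{not} positive in general: the anticommutator of two positive operators can have negative eigenvalues (take $P=\mathrm{diag}(1,\epsilon)$ with $0<\epsilon<1$ and $Q$ the rank-one projection onto $(1,1)/\sqrt2$; then $\det(PQ+QP)=-\tfrac14(1-\epsilon)^2<0$). So Lemma~\ref{lem4} does not apply to it, and for non-integer $r$ its $r$-th power is not even defined. Likewise Lemma~\ref{2a} cannot be applied with $|B|^2|A|^2$ and $|A|^2|B|^2$ as the two operators: these are not self-adjoint, let alone positive. Your closing claim that Lemma~\ref{2a} applied to $|A|^2,|B|^2$ ``produces the symmetrized term $\tfrac{|B|^2|A|^2+|A|^2|B|^2}{2}$'' is also a misconception: Lemma~\ref{2a} never creates cross terms; the mixed term arises solely from expanding $(|A|^2+|B|^2)^2$.

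The correct promotion, which is what the paper does, requires no operator-level treatment of the mixed term at all. From the displayed $r=1$ inequality, bound each inner product by the corresponding operator norm (legitimate, as both operators are self-adjoint), giving the scalar bound $|\langle B^*Ax,x\rangle|^2\le \tfrac14\||A|^4+|B|^4\|+\tfrac14\||A|^2|B|^2+|B|^2|A|^2\|$. Raise this \emph{scalar} inequality to the power $r$ and split using ordinary convexity of $t\mapsto t^r$, obtaining $\tfrac12\big(\tfrac{\||A|^4+|B|^4\|}{2}\big)^r+\tfrac12\big(\tfrac{\||A|^2|B|^2+|B|^2|A|^2\|}{2}\big)^r$. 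The mixed term is now already in its final form. Lemma~\ref{2a} is needed only for the other summand, applied to the genuinely positive operators $|A|^4$ and $|B|^4$ with $f(t)=t^r$: since $\big(\tfrac{\||A|^4+|B|^4\|}{2}\big)^r=\big\|\big(\tfrac{|A|^4+|B|^4}{2}\big)^r\big\|$ for a positive operator, Lemma~\ref{2a} gives the bound $\tfrac12\||A|^{4r}+|B|^{4r}\|$ for this quantity. In short, you have the roles of the two terms exactly swapped: the positivity-based lemma belongs on the $|A|^4,|B|^4$ piece, and the anticommutator piece needs nothing beyond scalar convexity. Note also that your opening move, bounding $|\langle B^*Ax,x\rangle|^{2r}$ by $\langle|A|^{2r}x,x\rangle\langle|B|^{2r}x,x\rangle$, would lead to the mixed term $\||A|^{2r}|B|^{2r}+|B|^{2r}|A|^{2r}\|$ rather than $\big(\tfrac{\||A|^2|B|^2+|B|^2|A|^2\|}{2}\big)^r$, with no lemma available to pass between the two; you were right to back away from it, but the alternative you settled on does not close the argument either.
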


\begin{proof}
	Let $x \in \mathcal{H}$ with $\|x\|=1$. Then we have,	
\begin{eqnarray*}
	|\langle B^*Ax,x\rangle|^{2}&=& |\langle Ax,Bx\rangle|^{2}\\ 
	&\leq& \|Ax\|^{2}\|Bx\|^{2}\\
&\leq&  \langle|A|^{2}x,x\rangle\langle|B|^{2}x,x\rangle \\ &\leq& \frac{1}{4}\left(  \langle|A|^{2}x,x\rangle+\langle|B|^{2}x,x\rangle\right)^2 \\ &=& \frac{1}{4}\left(  \langle\left(|A|^{2}+|B|^{2}\right) x,x\rangle\right)^2 \\ &\leq&\frac{1}{4}  \langle\left(|A|^{2}+|B|^{2}\right)^2 x,x\rangle \\  &=& \frac{1}{4}  \langle\left(|A|^{4}+|B|^{4}+|A|^{2}|B|^{2}+|B|^{2}|A|^{2}\right) x,x\rangle \\  &\leq& \frac{1}{4}  \|\left(|A|^{4}+|B|^{4}+|A|^{2}|B|^{2}+|B|^{2}|A|^{2}\right)\| \\ &\leq&  \frac{1}{4}  \||A|^{4}+|B|^{4}\|+\frac{1}{4}\||A|^{2}|B|^{2}+|B|^{2}|A|^{2}\|. 
\end{eqnarray*}
Thus we have,
\begin{eqnarray*}
 	|\langle B^*Ax,x \rangle|^{2r}&\leq&  \left( \frac{1}{4}\||A|^{4}+|B|^{4}\|+\frac{1}{4}\||A|^{2}|B|^{2}+|B|^{2}|A|^{2}\| \right)^r \\ 
 	&\leq& \frac{1}{2}  \left( \frac{\| |A|^{4}+|B|^{4}\|}{2} \right)^r +\frac{1}{2}\left(\frac{\| |A|^{2}|B|^{2}+|B|^{2}|A|^{2}\|}{2}\right)^r \\ 
 	&\leq& \frac{1}{4}   \| |A|^{4r}+|B|^{4r}\| +\frac{1}{2}\left(\frac{\| |A|^{2}|B|^{2}+|B|^{2}|A|^{2}\|}{2}\right)^r.\, (\textit{by Lemma \ref{2a}})  \end{eqnarray*} 
  Hence, taking supremum over $x \in \mathcal{H}$, $\|x\|=1$, we get 
  $$ w^{2r}(B^*A)\leq\frac{1}{2}\left(\frac{\||B|^{2}|A|^{2}+|A|^{2}|B|^{2}\|}{2}\right)^r+\frac{1}{4}\| |B|^{4r}+|A|^{4r}\|.$$
\end{proof}

\begin{remark} By using convexity property of $f(t)=t^r, r\geq 1$, we have
\begin{eqnarray*} 
	&& \frac{1}{2}\left(\frac{\||B|^{2}|A|^{2}+|A|^{2}|B|^{2}\|}{2}\right)^r+\frac{1}{4}\| |B|^{4r}+|A|^{4r}\|\\
	 &=& \frac{1}{2}w^r\left(\frac{ |B|^{2}|A|^{2}+|A|^{2}|B|^{2}}{2}\right)+\frac{1}{4}\| |B|^{4r}+|A|^{4r}\| \\ &\leq&  \frac{1}{2}\left(\frac{1}{2} w(|B|^{2}|A|^{2})+\frac{1}{2} w(|A|^{2}|B|^{2})\right)^r+\frac{1}{4}\| |B|^{4r}+|A|^{4r}\| \\ &\leq& \frac{1}{4}\left( w^r(|B|^{2}|A|^{2})+ w^r(|A|^{2}|B|^{2})\right)+\frac{1}{4}\| |B|^{4r}+|A|^{4r}\| \\ &=&\frac{1}{2} w^r(|B|^{2}|A|^{2})+\frac{1}{4}\| |B|^{4r}+|A|^{4r}\|.
\end{eqnarray*}
Thus, for all $r\geq 1$ , we have 
\begin{eqnarray*}  w^{2r}(B^*A)&\leq&\frac{1}{2}\left(\frac{\||B|^{2}|A|^{2}+|A|^{2}|B|^{2}\|}{2}\right)^r+\frac{1}{4}\| |B|^{4r}+|A|^{4r}\| \\
	&\leq&\frac{1}{2} w^r(|B|^{2}|A|^{2})+\frac{1}{4}\| |B|^{4r}+|A|^{4r}\|.
\end{eqnarray*}
Therefore,  the inequality in Theorem \ref{th4} is a refinement of that in  (\ref{h}). 
\end{remark}

Finally, as a direct application of Theorem \ref{thnn}, we obtain the following inequality for the generalized commutators of operators. We use the idea used in the proof of  \cite[Th. 3.2]{bp}.

\begin{theorem}\label{pth1}
	Let $A,B,X,Y\in \mathcal{B}(\mathcal{H})$. Then 
	\begin{eqnarray*}
&&	w(AXB \pm BYA)\\
	&\leq& 2\sqrt{2}\|B\|\max  \left\{\|X\|,\|Y\| \right\}\sqrt{ w^2(A)-\frac{\mid \|\Re(A)+\Im(A)\|^2-\|\Re(A)-\Im(A)\|^2\mid}{4}   }.
	\end{eqnarray*}
\end{theorem}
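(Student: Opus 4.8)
The plan is to separate the argument into two independent pieces: an elementary norm estimate for the generalized commutator that isolates a factor $\sqrt{\|A^*A+AA^*\|}$, followed by an appeal to Theorem~\ref{thnn} to convert that factor into the numerical-radius expression appearing in the statement. Concretely, I would first aim to establish the auxiliary bound
\begin{equation*}
w(AXB \pm BYA)\leq \sqrt{2}\,\|B\|\max\{\|X\|,\|Y\|\}\,\sqrt{\|A^*A+AA^*\|},
\end{equation*}
and only at the very end feed in the refined lower bound for $w^2(A)$.

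For the auxiliary bound I would follow the decomposition idea of \cite[Th.~3.2]{bp}. Fix $x\in\mathcal H$ with $\|x\|=1$ and write, by passing to adjoints,
\begin{equation*}
\langle (AXB\pm BYA)x,x\rangle=\langle XBx,A^*x\rangle\pm\langle YAx,B^*x\rangle.
\end{equation*}
Applying the triangle inequality and Cauchy--Schwarz to each term, and then using $\|Bx\|\leq\|B\|$, $\|B^*x\|\leq\|B\|$ together with $\|X\|,\|Y\|\leq M:=\max\{\|X\|,\|Y\|\}$, I expect to reach
$$|\langle (AXB\pm BYA)x,x\rangle|\leq \|B\|\,M\,\bigl(\|A^*x\|+\|Ax\|\bigr).$$
The two elementary facts that finish this step are the scalar inequality $a+b\leq\sqrt{2}\sqrt{a^2+b^2}$ for $a,b\geq 0$ and the identity $\|Ax\|^2+\|A^*x\|^2=\langle (A^*A+AA^*)x,x\rangle\leq\|A^*A+AA^*\|$, which together give $\|A^*x\|+\|Ax\|\leq\sqrt{2}\sqrt{\|A^*A+AA^*\|}$. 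Taking the supremum over all unit vectors $x$ then yields the auxiliary bound displayed above.

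To conclude, I would rewrite Theorem~\ref{thnn} as
$$\|A^*A+AA^*\|\leq 4\left(w^2(A)-\tfrac14\,\bigl|\,\|\Re(A)+\Im(A)\|^2-\|\Re(A)-\Im(A)\|^2\,\bigr|\right),$$
so that $\sqrt{\|A^*A+AA^*\|}\leq 2\sqrt{w^2(A)-\tfrac14\,|\,\cdots\,|}$; substituting this into the auxiliary bound produces exactly the constant $2\sqrt{2}\,\|B\|\,M$ in the statement. I do not anticipate a serious obstacle: the only real decision is organizational, namely to bound the commutator through the combined quantity $\sqrt{\|A^*A+AA^*\|}$ rather than through separate factors $\|A\|$ and $\|A^*\|$, since only the former lets Theorem~\ref{thnn} apply directly. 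As a sanity check, the quantity under the square root is nonnegative, because Theorem~\ref{thnn} itself gives $w^2(A)-\tfrac14\,|\,\cdots\,|\geq\tfrac14\|A^*A+AA^*\|\geq 0$.
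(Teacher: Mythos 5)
Your proposal is correct and follows essentially the same route as the paper: the triangle inequality plus Cauchy--Schwarz, the scalar bound $a+b\le\sqrt{2}\sqrt{a^2+b^2}$, and the identity $\|Ax\|^2+\|A^*x\|^2=\langle(A^*A+AA^*)x,x\rangle\le\|A^*A+AA^*\|$ give the auxiliary bound $w(AXB\pm BYA)\le\sqrt{2}\,\|B\|\max\{\|X\|,\|Y\|\}\,\|A^*A+AA^*\|^{1/2}$, which is then converted into the stated form via Theorem~\ref{thnn}. The only difference is organizational: the paper first proves the bound for $w(AX\pm YA)$ under the normalization $\|X\|,\|Y\|\le 1$, then homogenizes and substitutes $X\mapsto XB$, $Y\mapsto BY$, whereas you absorb $\|B\|$ and $\max\{\|X\|,\|Y\|\}$ directly in a single pass; both are valid.
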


\begin{proof}
	Let $x\in {\mathcal{H}}$ with $\|x\|=1$. If $\|X\|\leq 1$ and $\|Y\|\leq 1$,  then by Cauchy Schwarz inequality, we get 
	\begin{eqnarray*}
		|\langle (AX\pm YA)x,x\rangle|&\leq&  |\langle Xx,A^*x\rangle|+|\langle Ax,Y^*x\rangle| \\
		&\leq& \|A^*x\|+ \|Ax\| \\ 
		&\leq &  \sqrt{2}(\|A^*x\|^2+ \|Ax\|^2)^{\frac{1}{2}}\\ 
		&\leq&  \sqrt{2}\|AA^*+A^*A\|^{\frac{1}{2}}.
	\end{eqnarray*}
	Taking supremum over $\|x\|=1$, we get that, if $\|X\|\leq 1$ and $\|Y\|\leq 1$, then
	\begin{eqnarray}\label{eqnth1}
	w(AX\pm YA)&\leq& \sqrt{2}\|AA^*+A^*A\|^{\frac{1}{2}}.
	\end{eqnarray}
	Now we consider the general case, that is,  $X,Y\in \mathcal{B}(\mathcal{H})$ are arbitrary. 
	If $X=Y=0$, then (\ref{eqnth1}) holds trivially. If $\max  \left\{\|X\|,\|Y\| \right\}\neq 0$, then  $\left \| \frac{X}{\max  \left\{\|X\|,\|Y\| \right\}}\right\|\leq 1$ and $\left \| \frac{Y}{\max  \left\{\|X\|,\|Y\| \right\}}\right\|\leq 1$, and so it follows from the inequality (\ref{eqnth1}) that
	\begin{eqnarray}\label{eqnth2}
	w(AX\pm YA)\leq \sqrt{2}\max  \left\{\|X\|,\|Y\| \right\}\|AA^*+A^*A\|^{\frac{1}{2}}.
	\end{eqnarray}
	Replacing $X$ by $XB$ and $Y$ by $BY$ in the inequality (\ref{eqnth2}) we get,
	\begin{eqnarray*}
		w(AXB\pm BYA)\leq \sqrt{2}\max  \left\{\|XB\|,\|BY\| \right\}\|AA^*+A^*A\|^{\frac{1}{2}}.
	\end{eqnarray*}
	This implies that
	\begin{eqnarray*}
		w(AXB\pm BYA) \leq \sqrt{2} \|B\| \max  \left\{\|X\|,\|Y\| \right\}\|AA^*+A^*A\|^{\frac{1}{2}}.
	\end{eqnarray*}
Therefore, by using the inequality in Theorem \ref{thnn} we have,
\begin{eqnarray*}
	&&	w(AXB \pm BYA)\\
	&\leq& 2\sqrt{2}\|B\|\max  \left\{\|X\|,\|Y\| \right\}\sqrt{ w^2(A)-\frac{\mid \|\Re(A)+\Im(A)\|^2-\|\Re(A)-\Im(A)\|^2\mid}{4}   },
\end{eqnarray*}
as desired.
\end{proof}

Considering $X=Y=I$ (the identity operator) in Theorem \ref{pth1} we get the following corollary. 

\begin{cor}\label{corth1}
	If $A,B\in \mathcal{B}(\mathcal{H})$, then
	\begin{eqnarray*}
	w(AB\pm BA) &\leq & 2\sqrt{2} \|B\| \sqrt{ w^2(A)-\frac{\mid \|\Re(A)+\Im(A)\|^2-\|\Re(A)-\Im(A)\|^2\mid}{4} }.
	\end{eqnarray*}
\end{cor}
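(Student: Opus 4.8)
The plan is to establish the theorem by reducing the general statement to a clean normalized case, then invoking Theorem \ref{thnn} at the very end. First I would fix $x\in\mathcal{H}$ with $\|x\|=1$ and treat the case $\|X\|\le 1$, $\|Y\|\le 1$. Writing $\langle(AX\pm YA)x,x\rangle=\langle Xx,A^*x\rangle\pm\langle Ax,Y^*x\rangle$, I would apply the Cauchy--Schwarz inequality together with the norm bounds $\|Xx\|\le 1$ and $\|Y^*x\|\le 1$ to obtain $|\langle(AX\pm YA)x,x\rangle|\le \|A^*x\|+\|Ax\|$. The key trick here is to pass from a sum to an $\ell^2$ expression: by the elementary inequality $a+b\le\sqrt{2}\sqrt{a^2+b^2}$, I get the bound $\sqrt{2}\,(\|Ax\|^2+\|A^*x\|^2)^{1/2}=\sqrt{2}\,\langle(A^*A+AA^*)x,x\rangle^{1/2}$. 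Taking the supremum over all unit vectors $x$ then yields the normalized inequality
\begin{eqnarray*}
w(AX\pm YA)\le \sqrt{2}\,\|A^*A+AA^*\|^{1/2}
\end{eqnarray*}
valid whenever $\|X\|\le 1$ and $\|Y\|\le 1$.

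Next I would remove the normalization by a scaling argument. If $X=Y=0$ the inequality is trivial; otherwise set $M=\max\{\|X\|,\|Y\|\}>0$ and apply the normalized inequality to the operators $X/M$ and $Y/M$, which both have norm at most one. By homogeneity of the numerical radius this gives $w(AX\pm YA)\le \sqrt{2}\,M\,\|A^*A+AA^*\|^{1/2}$. I would then substitute $X\mapsto XB$ and $Y\mapsto BY$; the left side becomes $w(AXB\pm BYA)$, and on the right I use the submultiplicativity estimates $\|XB\|\le\|X\|\,\|B\|$ and $\|BY\|\le\|B\|\,\|Y\|$, so that $\max\{\|XB\|,\|BY\|\}\le\|B\|\max\{\|X\|,\|Y\|\}$. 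This produces
\begin{eqnarray*}
w(AXB\pm BYA)\le \sqrt{2}\,\|B\|\max\{\|X\|,\|Y\|\}\,\|A^*A+AA^*\|^{1/2}.
\end{eqnarray*}

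Finally I would invoke Theorem \ref{thnn}, which supplies the lower bound $w^2(A)\ge \tfrac14\|A^*A+AA^*\|+\tfrac14\big|\,\|\Re(A)+\Im(A)\|^2-\|\Re(A)-\Im(A)\|^2\big|$. Rearranging gives $\tfrac14\|A^*A+AA^*\|\le w^2(A)-\tfrac14\big|\,\|\Re(A)+\Im(A)\|^2-\|\Re(A)-\Im(A)\|^2\big|$, hence $\|A^*A+AA^*\|^{1/2}\le 2\sqrt{w^2(A)-\tfrac14|\cdots|}$. Substituting this into the displayed bound and collecting the factor of $2$ yields exactly the claimed inequality with the leading constant $2\sqrt{2}$.

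I do not anticipate a serious obstacle, since the argument is a sequence of standard estimates glued together; the one step that requires a little care is the passage through the Cauchy--Schwarz inequality in the first paragraph, where I must correctly distribute the two summands of $AX\pm YA$ and match each factor against the right unit-norm vector so that the norms $\|A^*x\|$ and $\|Ax\|$ emerge symmetrically, enabling the $\sqrt{2}$ trick. The rest is bookkeeping: tracking the $\pm$ signs uniformly through the scaling and the substitution $X\mapsto XB$, $Y\mapsto BY$, and then algebraically inverting the lower bound from Theorem \ref{thnn}.
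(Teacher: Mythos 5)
Your proposal is correct and follows essentially the same route as the paper: the paper proves the general bound $w(AXB\pm BYA)\le \sqrt{2}\,\|B\|\max\{\|X\|,\|Y\|\}\,\|A^*A+AA^*\|^{1/2}$ by exactly your Cauchy--Schwarz, $\ell^2$, scaling, and substitution steps (Theorem \ref{pth1}), then combines it with Theorem \ref{thnn} and specializes $X=Y=I$ to get the corollary. The only thing to add is the explicit final specialization $X=Y=I$, which turns your general inequality into the stated one.
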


\begin{remark} Let $A,B\in \mathcal{B}(\mathcal{H})$.
	(i) Fong and Holbrook \cite{G} proved that 
	\begin{eqnarray}\label{Fong}
	w(AB+ BA) \leq  2\sqrt{2} \|B\| w(A).
	\end{eqnarray}
	Clearly, the inequality in Corollary \ref{corth1} is stronger than the inequality (\ref{Fong}).\\
	(ii) Hirzallah and Kittaneh \cite{HK} improved on the inequality (\ref{Fong}) to prove that
	\begin{eqnarray}\label{Hirzallah}
	w(AB \pm BA)&\leq& 2\sqrt{2}\|B\|\sqrt{ w^2(A)-\frac{|~~\|\Re (A)\|^2-\|\Im (A)\|^2~~|}{2} }.
	\end{eqnarray}	
	Considering the same examples as in Remark \ref{remark1}(ii), we see that the inequalities in Corollary \ref{corth1} and (\ref{Hirzallah}) are not comparable, in general.\\
	(iii) It follows from the Corollary \ref{corth1} that if $	w(AB\pm BA) =  2\sqrt{2} \|B\| w(A)$, then $$\|\Re(A)+\Im(A)\|=\|\Re(A)-\Im(A)\|.$$

\end{remark}

\noindent \textbf{Data availability statement.}\\
Data sharing is not applicable to this article as no new data were created or analyzed in this study.

\bibliographystyle{amsplain}

\end{document}